\newcommand{\black}{\black}
\declaretheorem[]{theorem}
\declaretheorem[sibling=theorem]{lemma}
\declaretheorem[sibling=theorem]{conjecture}
\declaretheorem[sibling=theorem]{proposition}
\declaretheorem[numbered=yes]{remark}
\declaretheorem[numbered=yes]{question}
\numberwithin{equation}{section}
\newcommand{\lam}{\lambda}
\newcommand{\eps}{\epsilon}
\newcommand{\bl}{\boldsymbol{\lambda}}
\newcommand{\blank}[1]{}
\newcommand{\cI}{\mathcal{I}}
\title{On the zeroes of hypergraph independence polynomials}
\date{\today}
\author{David Galvin\footnote{University of Notre Dame, dgalvin1@nd.edu}, Gwen McKinley\footnote{University of California, San Diego, gmckinley@ucsd.edu}, Will Perkins\footnote{Georgia Institute of Technology, math@willperkins.org}, Michail Sarantis\footnote{Carnegie Mellon University, msaranti@andrew.cmu.edu}, Prasad Tetali\footnote{Carnegie Mellon University, ptetali@cmu.edu}}
\begin{document}
	\maketitle

\begin{abstract}
We study the  locations of complex zeroes of  independence polynomials of bounded degree hypergraphs.  For graphs, this is a long-studied subject with applications to statistical physics, algorithms, and combinatorics. Results on zero-free regions for bounded-degree graphs include Shearer's result on the optimal zero-free disk, along with several recent results on other zero-free regions.   Much less is known for hypergraphs.  We make some steps towards an understanding of zero-free regions for  bounded-degree hypergaphs by proving that all  hypergraphs of maximum degree $\Delta$ have a zero-free disk almost as large as the optimal disk for graphs of maximum degree $\Delta$ established by Shearer (of radius $\sim 1/(e \Delta)$).  Up to logarithmic factors in $\Delta$ this is optimal, even for hypergraphs with all edge-sizes strictly greater than $2$.  We conjecture that for $k\ge 3$, $k$-uniform {\em linear} hypergraphs have a much larger zero-free disk of radius $\Omega(\Delta^{- \frac{1}{k-1}} )$. We establish this in the case of linear hypertrees.   
 
\end{abstract}

\section{Introduction}
\label{secIntro}

A hypergraph $G=(V,E)$ is a set of vertices $V$ along with a set of edges $E$ each of which is a  subset of $V$ of size at least $2$.  A hypergraph is $k$-uniform if all edges are of size $k$.  A $2$-uniform hypergraph is a graph.   The degree of a vertex $v \in V$ is the number of edges it appears in; in a hypergraph of maximum degree $\Delta$, each vertex appears in at most $\Delta$ edges.  An independent set in $G$ is a subset $I \subseteq V$ that contains no edge $e \in E$.  Let $\mathcal I(G)$  denote the set of all independent sets of $G$.  The independence polynomial of the hypergraph $G$ is 
\begin{equation}
\label{eqZdef}
 Z_G(\lam) = \sum_{I \in \cI(G)} \lam^{|I|} \,.
 \end{equation}
Independence polynomials for graphs arise in numerous contexts in mathematics, physics, and computer science, including in the study of the  {L}ov{\'a}sz  Local Lemma in probabilistic combinatorics~\cite{shearer1985problem,scott2005repulsive}, in the study of the hard-core lattice gas in statistical physics~\cite{baxter1980hard,van1994disagreement,galvin2004phase}, and in algorithmic problems of approximate counting and sampling~\cite{weitz2006counting,sly2010computational}.

In all of these settings, knowledge of the complex zeroes of $Z_G(\lam)$, or more precisely, knowledge of regions of $\mathbb C$ uniformly free from zeroes of $Z_G$ for some class of graphs, is crucial in understanding the  phenomena of interest.  For instance, as Shearer shows~\cite{shearer1985problem} (and Scott and Sokal expand upon~\cite{scott2005repulsive}), the largest negative zero of $Z_G(\lam)$ provides the optimal bound for the  {L}ov{\'a}sz  Local Lemma for a set of events  with a given dependency graph $G$;  the Yang--Lee theory of phase transitions states that phase transitions can only occur where complex zeroes condense on the real axis~\cite{yang1952statistical}; and regions free of complex zeros for bounded-degree graphs may determine the computational complexity of the associated approximate counting problem~\cite{harvey2018computing,peters2019conjecture,bezakova2019inapproximability}. 

For the class of graphs of maximum degree $\Delta$, much is known about optimal zero-free regions.  The optimal zero-free disk around $0$ has radius 
\begin{equation}
 \lam_s(\Delta) := \frac{(\Delta-1)^{\Delta-1}}{\Delta^{\Delta}}  \, ;
 \end{equation}
 that is, for any such $G$ and any $\lam \in \mathbb C$ with $|\lam| < \lam_s(\Delta)$, $Z_G(\lam) \ne 0$ ~\cite{shearer1985problem}.  Moreover, this result is tight: for any $\eps>0$ there is a graph of maximum degree $\Delta$  with a zero of $Z_G(\lam)$ of magnitude at most $\lam_s(\Delta)+ \eps$.  In fact, this graph can be taken to be a tree, and even more specifically, a finite-depth truncation of the infinite $\Delta$-regular tree.  There are also known zero-free regions that are not disks; for instance, those that extend beyond the optimal zero-free disk in the direction of the positive real axis~\cite{peters2019conjecture,bencs2018note,buys2021cayley,bencs2022complex}. 

In this paper we turn to the case of bounded-degree hypergraphs with hyperedges of size greater than $2$.  Our motivation is threefold.

\begin{enumerate}
\item Statistical physics.  In the language of statistical physics, the hard-core model on a graph is a model with pair interactions, while the hard-core model on a hypergraph with edges of size $>2$ has \textit{multi-body} interactions.  Multi-body interactions are relevant for a range of physical phenomena, but they are also often more difficult to analyze on both a rigorous and non-rigorous level. There is a vast body of literature on the convergence properties of the cluster expansion in models with pair interactions~\cite{groeneveld1962two,penrose1963convergence,ruelle1963correlation,fernandez2007analyticity,fernandez2007cluster,procacci2017convergence,nguyen2020convergence}, including bounds, like Shearer's, that are optimal or near-optimal.  The understanding of the convergence of the cluster expansion in models with multi-body interactions is much more limited, and bounds are typically non-effective (with respect to, say, graph degree), exclude interactions with a multi-body hard-core, or require a pairwise hard-core interaction in addition to a multi-body interaction~\cite{greenberg1971thermodynamic,moraal1976kirkwood,cassandro1981renormalization,rebenko1997convergence,procacci2000gas,rebenko2005polymer}.  See the discussion in~\cite{brydges1984short,jansen2020cluster} on   obstacles to proving convergence of the multibody cluster expansion.

\item Algorithms.  Zero-free regions of independence polynomials (and graph polynomials and partition functions more generally) are closely linked to the computational complexity of approximate counting and sampling. Barvinok~\cite{barvinok2016combinatorics}  developed an approach to approximate counting and sampling based on truncating the Taylor series (or cluster expansion) for $\log Z$; the accuracy of this truncation relies on the existence of a zero-free region for $Z$.  Refinements and applications of this method appear in, e.g.,~\cite{barvinok2016computing,barvinok2017computing,regts2018zero,patel2016deterministic, liu2019ising, HelmuthAlgorithmic2,harrow2020classical,bencs2021zero}.  On the other hand, computational complexity results for approximate counting can be proved using the existence of complex zeroes of $Z$~\cite{bezakova2019inapproximability,bezakova2021complexity,de2021zeros,buys2022lee,galanis2022complexity}.

\item Combinatorics. Beyond its application to the {L}ov{\'a}sz  Local Lemma, the cluster expansion has been used recently as a tool in asymptotic enumeration, e.g.~\cite{jenssen2020independent,balogh2021independent,jenssen2020homomorphisms, davies2021proof,jenssen2022independent}.  The high-level idea in many of these applications is to interpret defects from an extremal configuration (or a simple set of configurations) in a combinatorial problem as a statistical physics model with a pair interaction and then use the cluster expansion to estimate the partition function of this new model.  This approach is effective when typical configurations are very `ordered', with structure that resembles a well understood extremal example with sparse defects.  On the other hand, a powerful approach to asymptotic enumeration in the opposite regime, when typical configurations are unstructured, is that of Janson's inequality and its extensions~\cite{janson1988exponential,wormald1996perturbation,stark2018probability,mousset2020probability}.  In its most commonly applied form, this approach estimates the partition function of a hypergraph hard-core model: the scaled probability that a $p$-random subset of a ground set does not contain any of a specified family of subsets (the ground set is the vertex set and the family of subsets are the hyperedges of a hypergraph).  The estimate for this general problem in~\cite{mousset2020probability} takes the form of an exponential of a sum of terms defined in much the same way the terms of the cluster expansion are defined. That result, however, does  not show convergence of the infinite series defined by the cluster expansion.  If one could prove convergence, one could extend the results of~\cite{stark2018probability,mousset2020probability} and deduce additional consequences through control of certain moment generating functions (as in, e.g.,~\cite{cannon2020counting,jenssen2020independent} in the case of the graph cluster expansion).   This application is perhaps  our primary motivation for understanding zeroes of hypergraph independence polynomials, and it is the results of~\cite{mousset2020probability} that suggest Conjecture~\ref{conjLinear}.

\end{enumerate}

\subsection{Zero-freeness for hypergraph independence polynomials}

Our first main result is that \textit{all} hypergraphs satisfy a bound close to the Shearer bound for graphs.
\begin{theorem}
\label{thmZeroFreeDisk}
Let $G$ be a hypergraph of maximum degree $\Delta$.  Suppose 
\[ |\lam| \leq   \frac{\Delta^\Delta}{(\Delta+1)^{\Delta+1}}  = \lam_s(\Delta+1) \, .\]
Then $Z_G(\lam) \ne 0$.
\end{theorem}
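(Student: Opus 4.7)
The plan is strong induction on $|V(G)|$, with a strengthened inductive hypothesis controlling the vertex-removal ratios $Z_H(\lam)/Z_{H-v}(\lam)$ for every hypergraph $H$ of max degree at most $\Delta$. For a vertex $v \in V(G)$ with incident hyperedges $e_1, \ldots, e_d$ (so $d \leq \Delta$), the starting point is the deletion-contraction identity
\begin{equation} \label{eq:recursion-plan}
Z_G(\lam) \;=\; Z_{G - v}(\lam) \;+\; \lam \, Z_{G \ominus v}(\lam),
\end{equation}
where $G - v$ removes $v$ together with every edge through $v$, and $G \ominus v$ removes $v$ but replaces each $e_i$ by $e_i \setminus \{v\}$ (cascading to also delete the remaining vertex if $|e_i| = 2$, and deleting any edges that become trivial as a result). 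Both sub-hypergraphs have max degree at most $\Delta$ on strictly fewer vertices, so the inductive hypothesis applies.

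The strengthened inductive hypothesis I would aim for is that for every such $H$ and $v \in V(H)$, the ratio $Z_H(\lam)/Z_{H - v}(\lam)$ lies in a fixed region $\Omega \subset \mathbb{C} \setminus \{0\}$ depending only on $\Delta$ and $\lam$. Following Shearer's approach for graphs, I would choose $\Omega$ to be a closed disk centred on a positive real number (the fixed point of the tree recursion at $|\lam|$), tuned so that $\Omega$ is stable under the composed recursion below. The nonvanishing $Z_G(\lam) \neq 0$ then follows immediately from $Z_G/Z_{G-v} = 1 + \lam \cdot (Z_{G \ominus v}/Z_{G - v}) \in \Omega$.

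The heart of the proof is thus to control $Z_{G \ominus v}(\lam)/Z_{G - v}(\lam)$. I would expand this ratio by inclusion-exclusion over the $d$ extra constraints that $G \ominus v$ imposes on top of $G - v$ (one per hyperedge through $v$), obtaining an alternating sum indexed by subsets $T \subseteq [d]$ whose terms involve ``pinned'' ratios $Z_{(G - v)^{U_T}}(\lam)/Z_{G - v}(\lam)$, where $U_T := \bigcup_{i \in T}(e_i \setminus \{v\})$. Each pinned ratio can in turn be unfolded as a telescoping product of single-vertex-deletion ratios, each bounded by the inductive hypothesis.

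The main obstacle is the combinatorial complexity of this alternating sum. For graphs, $|e_i| = 2$ and the $U_T$'s are disjoint unions of neighbours of $v$, so the sum factors cleanly as $\prod_{i=1}^d(1 - \lam \cdot \text{ratio}_i)$, recovering Shearer's classical bound $\lam_s(\Delta)$. For general hypergraphs the $U_T$'s may overlap and no such factorisation holds. The degradation from $\lam_s(\Delta)$ to $\lam_s(\Delta + 1)$ should arise because one must effectively account for $v$ itself as one additional ``slot'' alongside its $\Delta$ hyperedges in the worst case. The technical crux is to verify that this worst case of the inclusion-exclusion bound corresponds to a Shearer-type $(\Delta + 1)$-regular tree configuration, so that $\Omega$ remains contracting under the recursion precisely when $|\lam| \leq \lam_s(\Delta + 1)$.
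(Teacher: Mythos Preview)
Your overall framework---strong induction on $|V|$, the deletion--contraction identity, and a strengthened inductive bound on the vertex-deletion ratio $|Z_M/Z_{M-v}|$---matches the paper's. The gap is in your plan for bounding $Z_{G\ominus v}/Z_{G-v}$. You claim that after inclusion--exclusion each pinned ratio $Z_{(G-v)^{U_T}}/Z_{G-v}$ ``can be unfolded as a telescoping product of single-vertex-deletion ratios''. This is exactly where the hypergraph case diverges from graphs: forcing $U_T$ into the independent set gives $Z_{(G-v)^{U_T}} = \lam^{|U_T|} Z_{(G-v)/U_T}$, and the contracted hypergraph $(G-v)/U_T$ differs from the merely vertex-deleted one by having every edge meeting $U_T$ \emph{shrunk} to a smaller but still nontrivial edge. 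You cannot reach $(G-v)/U_T$ from $G-v$ by vertex deletions alone, so an inductive hypothesis that controls only $Z_H/Z_{H-v}$ is not self-sustaining---each telescope step regenerates edge-shrinking operations that the hypothesis does not cover, and your proposal gives no mechanism to terminate or sum this cascade.

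The paper closes this gap by carrying a \emph{second} inductive hypothesis in parallel: for any admissible $M$ and any $j$-set $A$ that is a fragment of an edge of $G$, it proves an ``edge-addition'' bound $|Z_M| \ge (1-s_j)\,|Z_{M+A}|$. The passage from $M-v$ to $M/v$ then factors cleanly as at most $\Delta$ elementary steps, each either a vertex deletion or an edge addition, both controlled by the coupled induction. Setting all $s_j=s$ yields the single constraint $R \le s(1-s)^{\Delta}$, optimized at $s=1/(\Delta+1)$ to give $R=\lam_s(\Delta+1)$. So the $+1$ does not arise from a $(\Delta+1)$-regular tree worst case as you speculate; it is simply the exponent $\Delta$ (rather than $\Delta-1$) in this constraint, reflecting that the graph-case trick of saving one operation per vertex fails once edge-shrinking can keep producing vertices with a full $\Delta$ operations pending.
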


In Theorem~\ref{thm:graph_bound} in Section~\ref{sec-c-over-Delta} we will prove a stronger statement:  in the multivariate setting in which each vertex $v$  receives its own activity $\lam_v$, $Z_G \ne 0$ when $|\lam_v| < \lam_s(\Delta+1)$ for all $v$.

The best previous bound on zero-free disks for hypergraph independence polynomials is due to Bencs, Csikv{\'a}ri, and Regts~\cite[Corollary 6]{BencsCsikvariRegts2021hypergraph} who proved zero-freeness for $|\lam |\leq 2^{-\Delta}$.

The bound in Theorem~\ref{thmZeroFreeDisk} is nearly tight apart from the possible improvement of substituting $\Delta$ for $\Delta+1$ in the bound (see Remark~\ref{rmkD1} below on one obstacle to this improvement).  The examples that show this near tightness  are  graphs (the  family of trees that prove tightness of Shearer's result), and so one might hope that in a $k$-uniform hypergraph with $k>2$ an improvement is possible.  Thanks to an example (provided to us by Wojciech Samotij), we know that in  general no polynomial improvement in $\Delta$ is possible.

\begin{proposition}
    \label{propExample}
    For each $k \ge 3$ odd, there is a family of $k$-uniform hypergraphs of maximum degree $\Delta$ with smallest root $\lam$ satisfying
    \[ |\lam | = O \left(  \frac{\log \Delta}{\Delta} \right) \,.\]
\end{proposition}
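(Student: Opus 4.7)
The plan is to exhibit a simple \emph{sunflower} $k$-uniform hypergraph and verify by asymptotic analysis that its independence polynomial has a zero of magnitude $O(\log\Delta/\Delta)$. Let $H=(V,E)$ be given by $V = A \sqcup B$ with $|A|=\Delta$, $|B|=k-1$, and $E = \{\{a\}\cup B : a \in A\}$. Each vertex of $A$ lies in a single edge while each vertex of $B$ lies in all $\Delta$ edges, so $H$ is $k$-uniform of maximum degree $\Delta$. Partitioning $\cI(H)$ according to whether $B \subseteq I$ (which forces $I\cap A = \emptyset$) or $B \not\subseteq I$ (leaving $I\cap A$ unrestricted) gives
\[
Z_H(\lam) \;=\; (1+\lam)^\Delta\bigl[(1+\lam)^{k-1} - \lam^{k-1}\bigr] + \lam^{k-1} \;=\; F(\lam) + R(\lam),
\]
where $F(\lam) := (1+\lam)^\Delta + \lam^{k-1}$ and the correction $R(\lam) := (1+\lam)^\Delta\bigl[(1+\lam)^{k-1}-1-\lam^{k-1}\bigr] = (k-1)\lam(1+\lam)^\Delta + O\bigl(\lam^2(1+\lam)^\Delta\bigr)$ is small when $|\lam|$ is small.

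I would next locate a zero of $F$. Setting $(1+\lam)^\Delta = -\lam^{k-1}$, substituting $\lam = w/\Delta$, and using $\Delta\log(1+w/\Delta) = w + O(|w|^2/\Delta)$ gives the asymptotic equation
\[
w - (k-1)\log w \;=\; -(k-1)\log \Delta + i\pi(2j+1).
\]
Writing $w = r e^{i\theta}$ with $\theta$ near $\pi$ and taking $j = 0$, matching real and imaginary parts yields the coupled system $r\cos\theta = (k-1)\log(r/\Delta)$ and $r\sin\theta = (k-1)\theta + \pi$. Iteration produces the solution $r = (k-1)\log\Delta - (k-1)\log\log\Delta + O(1)$ and $\theta - \pi = O(1/\log\Delta)$, so $F$ has a simple zero $\lam_0$ with $|\lam_0| = r/\Delta = O(\log\Delta/\Delta)$. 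The hypothesis $k$ odd enters exactly here: for odd $k$ we have $k-1$ even, so $-\lam^{k-1}$ is negative on the negative real axis while $(1+\lam)^\Delta$ is positive there, and the balance $(1+\lam)^\Delta = -\lam^{k-1}$ must therefore be satisfied by a non-real $\lam$.

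Finally, I would upgrade the approximate zero into an exact zero of $Z_H$ via Rouch\'e's theorem on a circle $|\lam - \lam_0| = \rho$ with $\log\Delta/\Delta^2 \ll \rho \ll \log\Delta/\Delta$. Computing $F'(\lam_0) \approx -\Delta \lam_0^{k-1}$, which has magnitude of order $(\log\Delta)^{k-1}/\Delta^{k-2}$, gives the linearization $|F(\lam)| \asymp \rho\,(\log\Delta)^{k-1}/\Delta^{k-2}$ on the circle, while $|R(\lam)| = O\bigl((\log\Delta/\Delta)^k\bigr)$ uniformly there. The inequality $|R|<|F|$ then holds throughout the stated range of $\rho$, so $Z_H$ and $F$ have the same number of zeros inside the disk, namely one. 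The main technical obstacle is precisely this error analysis: one must verify linear behavior of $F$ near $\lam_0$ and control $R$ uniformly on the circle, using $\rho/|\lam_0| = O(1/\Delta) \to 0$ to justify the Taylor expansion and to bound the variation of $(1+\lam)^\Delta$ over the contour.
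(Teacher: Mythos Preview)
Your construction and analysis are correct, but the route is genuinely different from the paper's, and the differences are worth noting.

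\textbf{The paper's approach.} The paper uses a more elaborate sunflower due to Samotij: a core of $k$ vertices $\{x_1,\dots,x_k\}$, and for each $(k-1)$-subset of the core a separate cloud of $s$ pendant vertices, each forming an edge with that $(k-1)$-subset. The maximum degree is $\Delta=(k-1)s$. The point of this design is that the single independent set consisting of the entire core contributes $\lam^k$ to $Z$, and for $k$ odd this is \emph{negative} at real $\lam<0$; meanwhile every other contribution carries a factor $(1+\lam)^{s\ell}$ for some $\ell\ge k-1$, which at $\lam=-k(\log\Delta)/\Delta$ is at most $\Delta^{-k^2/(k-1)}$ and hence is swamped by $|\lam|^k\sim(\log\Delta/\Delta)^k$. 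One then simply checks $Z(-k\log\Delta/\Delta)<0$ and invokes the intermediate value theorem to obtain a \emph{real} negative root of the required magnitude. No complex analysis is needed.

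\textbf{Your approach.} Your hypergraph has a core of size $k-1$, so the analogue of the ``full core'' term is $\lam^{k-1}$, which for $k$ odd is positive on the negative real axis. Indeed one can check directly that your $Z_H(\lam)>0$ for all $\lam\in(-1,0)$ when $k$ is odd, so the intermediate value argument is unavailable and you are forced into the complex plane. Your Rouch\'e strategy is sound: the asymptotic location of the zero of $F$, the order of $F'(\lam_0)$, and the bound $|R|=O((\log\Delta/\Delta)^k)$ are all correct, and a radius $\rho$ with $\log\Delta/\Delta^2\ll\rho\ll 1/\Delta$ makes both the linearization of $F$ and the domination $|R|<|F|$ work. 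One point you gloss over is the rigorous \emph{existence} of the approximate zero $\lam_0$ of $F$: your ``iteration produces the solution'' is a heuristic, and to make it precise you would need a separate fixed-point or Rouch\'e argument for $F$ itself (e.g.\ comparing $F$ to a polynomial with an explicitly known root). This is routine but should be acknowledged as a step.

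\textbf{Trade-off.} Your hypergraph is simpler (and in fact also proves the result for even $k$, where a real-root IVT argument does work for your $Z_H$), but for odd $k$ the price is a two-layer complex-analytic argument. The paper's construction is engineered precisely so that parity of $k$ produces a real sign change, reducing the analysis to a one-line estimate.
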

In Section~\ref{sec-WS} we give the details of this construction. 

Still one might hope that with additional conditions on the hypergraph a significant improvement might be obtained. 

A hypergraph is \textit{linear} if each pair of edges intersect in at most one vertex (a graph by definition must be linear).   We conjecture that  $k$-uniform, linear hypergraphs of bounded degree have much larger zero-free disks.
\begin{conjecture}
\label{conjLinear}
    For each $k \ge 2$, there exists a constant $C_k > 0$, so that the following is true.  If $G$ is a $k$-uniform, linear hypergraph of maximum degree $\Delta$ and if  
    \[ |\lam| \leq C_k  \Delta^{- \frac{1}{k-1}}  \,,\]
    then $Z_G(\lam ) \ne 0$. 
\end{conjecture}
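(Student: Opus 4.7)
The plan is to prove the conjecture by analyzing a tree recursion for the ratio $R_v \coloneqq Z_G^{v\in I}/Z_G^{v\notin I}$ at a distinguished vertex, following the Shearer/Scott--Sokal template but exploiting the $(k-1)$-fold product of child-ratios that linearity affords. For a linear hypertree rooted at $v$ with incident edges $e_1,\dots,e_d$ ($d\le\Delta$), linearity ensures that deleting $v$ splits each $e_i\setminus\{v\}=\{u_1^i,\dots,u_{k-1}^i\}$ into $k-1$ vertex-disjoint sub-hypertrees, and a short inclusion-exclusion on whether all of $u_1^i,\dots,u_{k-1}^i$ lie in $I$ yields
\[
R_v \;=\; \lambda \prod_{i=1}^{d}\Bigl(1 - \prod_{j=1}^{k-1}\frac{R_{u_j^i}}{1+R_{u_j^i}}\Bigr).
\]

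The first step is to identify an invariant disk $D=\{R\in\mathbb{C}:|R|\le\eta\}$ with $\eta$ of order $\Delta^{-1/(k-1)}$. If all child-ratios lie in $D$, then for $\eta$ small $|R_u/(1+R_u)|\le\eta/(1-\eta)\le 2\eta$, so
\[
|R_v|\;\le\;|\lambda|\bigl(1+(2\eta)^{k-1}\bigr)^{\Delta}\;\le\;|\lambda|\exp\!\bigl(2^{k-1}\Delta\eta^{k-1}\bigr).
\]
Choosing $\eta^{k-1}=c/\Delta$ for a small constant $c$ renders the exponential $O(1)$, and then $|\lambda|\le C_k\Delta^{-1/(k-1)}$ with $C_k$ small enough closes the induction, giving $|R_v|\le\eta<1$. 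Running the induction from the leaves of the hypertree upward (base case $R=\lambda$) then shows the denominators $Z^{v\notin I}$ never vanish, so $Z_G(\lambda)\ne 0$ for every linear hypertree. Some constant-tuning may require shifting the disk off $0$ or replacing it by a potential function in the Peters--Regts style, and in any refinement one must keep $-1\notin D$ so that $1+R_v$ remains nonzero throughout.

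The main obstacle will be passing from hypertrees to arbitrary linear hypergraphs. For graphs, Weitz's self-avoiding-walk tree converts any bounded-degree graph with a root $v$ into a tree of the same maximum degree whose root ratio agrees with the original, reducing zero-freeness to the tree case. No such reduction is known in the hypergraph setting: naive unwinding through repeated vertices either destroys linearity or inflates the degree, and the multi-body hyperedge constraints have no clean single-vertex pinning surrogate. A first attempt would be to develop a hypergraph self-avoiding walk construction that records boundary conditions at revisited vertices and to bound the resulting distortion of $R_v$; failing that, one could try a global inductive argument in the Peters--Regts or Bencs--Csikv\'ari--Regts spirit that directly controls zero-freeness on all subhypergraphs simultaneously, or a hypergraph cluster expansion calibrated to linearity via a weighted count of connected sub-hypergraphs at scale $\Delta^{-1/(k-1)}$, in the spirit of the Janson-type bounds of Mousset et al.\ that inspired the conjecture.
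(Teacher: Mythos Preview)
The statement you are attempting is Conjecture~\ref{conjLinear}, which the paper does \emph{not} prove; it establishes only the special case of $k$-uniform linear hypertrees (Theorem~\ref{thmTrees}). Your proposal is candid about this: the first half gives a complete argument for hypertrees, and the second half correctly identifies the passage to general linear hypergraphs as the open problem. So there is no error of overclaiming, but you should be aware that what you have is exactly the same partial result the paper obtains, not a proof of the conjecture.

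Your hypertree argument is correct and takes a genuinely different route from the paper. The paper proves Theorem~\ref{thm:linearTrees_bound} by the Shearer/Scott--Sokal deletion--contraction scheme of Section~\ref{sec-c-over-Delta}, maintaining two inductive inequalities (vertex deletion and edge addition) over a restricted class of admissible subhypergraphs in which each component has at most one short edge; the improvement over Theorem~\ref{thm:graph_bound} comes from carefully tracking these structural invariants through the induction. You instead work directly with the occupancy ratio $R_v$ and the explicit recursion it satisfies on a rooted linear hypertree, and close an invariant disk $|R|\le\eta$ with $\eta\asymp\Delta^{-1/(k-1)}$. Your approach is shorter and more transparent for hypertrees, and it makes the role of the exponent $1/(k-1)$ visible immediately (it is the power at which the $k-1$ child ratios combine). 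The paper's approach, on the other hand, is closer in spirit to what one would need for general hypergraphs, since it does not rely on a clean recursive decomposition but on local identities that hold in any hypergraph; its difficulty for the full conjecture is that the structural invariant (``at most one short edge per component'') is exactly what fails once cycles appear.

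Your diagnosis of the obstacle is accurate: the Weitz self-avoiding-walk reduction has no known hypergraph analogue that preserves both linearity and bounded degree, and this is precisely why the conjecture remains open. The directions you sketch (a hypergraph SAW with boundary conditions, a Peters--Regts-style global induction, or a cluster-expansion argument calibrated to linearity) are reasonable, but none is known to work, and the paper offers no further progress on them either.
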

The case $k=2$ is proved by Shearer's theorem; for larger $k$ the conjecture posits a polynomial improvement to the bound of Theorem~\ref{thmZeroFreeDisk}.

We can prove Conjecture~\ref{conjLinear} in a special case.  A linear hypertree is a connected, linear hypergraph with a unique path between any pair of vertices. 
We next show that linear hypertrees satisfy the bound of Conjecture~\ref{conjLinear}.
\begin{theorem}
\label{thmTrees}
    For each $k \ge 2$ the following is true.  If $G$ is a $k$-uniform, linear hypertree of maximum degree $\Delta$ and if  
    \begin{equation}
    \label{eqGraphBoundEQ}
     |\lam| \leq \left(\frac{\Delta-1}{\Delta}\right)^{\Delta-1} \left( 1- \Delta^{ - \frac{1}{k-1}}   \right)\Delta^{ - \frac{1}{k-1}}
     \end{equation}
    then $Z_G(\lam ) \ne 0$. In particular, 
   \begin{equation}
    \label{eqGraphBoundEQ2}
     |\lam| \leq \frac{\log 2}{2 k}  \Delta^{- \frac{1}{k-1}}\,,
     \end{equation}
     implies $Z_G(\lam ) \ne 0$.
\end{theorem}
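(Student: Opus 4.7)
The plan is to root $G$ at an arbitrary vertex $r$ and track complex ``marginal probabilities'' up the rooted subtrees from leaves to $r$, showing inductively that they lie in the disk of radius $P := \Delta^{-1/(k-1)}$ about the origin. For a vertex $u$, let $T_u$ be the subtree of $G$ rooted at $u$ (excluding the parent edge) and set
\[ p_u \;\coloneqq\; \frac{Z_{T_u,\, u\in I}(\lambda)}{Z_{T_u}(\lambda)}, \]
where $Z_{T_u,\,u\in I}$ is the weighted sum of independent sets of $T_u$ containing $u$. Conditioning on whether $u\in I$ and using that the linear-hypertree property makes the ``grandchild subtrees'' $T_{u_{i,j}}$ pairwise vertex-disjoint (where $u$ has descending edges $e_1,\dots,e_d$ and $e_i\setminus\{u\} = \{u_{i,1},\dots,u_{i,k-1}\}$), one obtains the separable recursion
\[ p_u \;=\; \frac{\lambda\,F_u}{1+\lambda\,F_u}, \qquad F_u \;\coloneqq\; \prod_{i=1}^d\Bigl(1 - \prod_{j=1}^{k-1} p_{u_{i,j}}\Bigr), \]
together with the factorization $Z_{T_u} = \bigl(\prod_{i,j}Z_{T_{u_{i,j}}}\bigr)(1+\lambda F_u)$. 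The latter reduces nonvanishing of $Z_G$ to nonvanishing of every denominator $1+\lambda F_u$ along the recursion.

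The main step is to show by induction from leaves that $|p_u|\leq P$ for all non-root $u$. At a leaf ($d=0$), $p_u = \lambda/(1+\lambda)$ and $|p_u|\leq |\lambda|/(1-|\lambda|)\leq P$ follows from $|\lambda|\leq P/(1+P)$, which is weaker than the inductive condition below. For the inductive step, assuming $|p_{u_{i,j}}|\leq P$ at every grandchild, the triangle inequality gives
\[ |F_u| \;\leq\; (1+P^{k-1})^d \;\leq\; \bigl(1+\tfrac{1}{\Delta}\bigr)^{\Delta-1}, \]
since $P^{k-1}=1/\Delta$ and $d\leq \Delta-1$ at a non-root $u$. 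A reverse triangle estimate then yields $|p_u|\leq |\lambda F_u|/(1-|\lambda F_u|)$, which is $\leq P$ iff
\[ |\lambda|\,(1+P)\bigl(1+\tfrac{1}{\Delta}\bigr)^{\Delta-1} \;\leq\; P. \]
The hypothesized bound \eqref{eqGraphBoundEQ} implies this, as the ratio of its right-hand side to this requirement equals $\bigl(1-\tfrac{1}{\Delta^2}\bigr)^{\Delta-1}\bigl(1-\Delta^{-2/(k-1)}\bigr)\leq 1$. For the root, where $d_r\leq \Delta$, the same style of estimate (with $\Delta$ in place of $\Delta-1$ in the exponent) gives $|\lambda F_r|<1$ from the hypothesis, so $1+\lambda F_r\neq 0$ and hence $Z_G(\lambda)\neq 0$.

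The simpler form \eqref{eqGraphBoundEQ2} will follow from \eqref{eqGraphBoundEQ} by a short real-analysis check of the numerical inequality $(1-1/\Delta)^{\Delta-1}\bigl(1-\Delta^{-1/(k-1)}\bigr)\geq (\log 2)/(2k)$ for all $\Delta,k\geq 2$, using monotonicity of $(1-1/\Delta)^{\Delta-1}$ (which lies in $[1/e,1/2]$) together with a Taylor estimate for $1-e^{-t}$ near $0$ applied at $t = (\log \Delta)/(k-1)$. The main obstacle in the proof is the complex-analytic invariance step itself: showing that the disk $\{|z|\leq P\}$ is preserved under the multivariate recursion $(p_{u_{i,j}})\mapsto \lambda F_u/(1+\lambda F_u)$. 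The triangle-inequality bound on $|F_u|$ closes because $F_u$ factors completely over the grandchild vertices, and this factorization is precisely the dividend of the linearity-and-tree hypothesis — it is what enables the polynomial zero-free radius $\Omega(\Delta^{-1/(k-1)})$ here, in contrast to the $O(1/\Delta)$ scale of Theorem~\ref{thmZeroFreeDisk}.
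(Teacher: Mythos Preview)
Your proof is correct, but takes a genuinely different route from the paper's. The paper proves Theorem~\ref{thmTrees} (via the multivariate Theorem~\ref{thm:linearTrees_bound}) by adapting the Shearer-style deletion/contraction argument of Theorem~\ref{thm:graph_bound}: it carries along two coupled inductive statements, a vertex-deletion bound and an edge-addition bound, restricted to admissible subhypergraphs in which every component has at most one edge of size $<k$, and optimizes a family of parameters $s_1,\dots,s_{k-1}$ (taking $s_1=\cdots=s_{k-2}=\Delta^{-1/(k-1)}$ and $s_{k-1}=1/\Delta$). Your argument instead exploits the tree structure directly via a Weitz-type occupation-ratio recursion $p_u = \lambda F_u/(1+\lambda F_u)$ and shows the disk $\{|z|\le \Delta^{-1/(k-1)}\}$ is invariant under it.

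The trade-offs: your approach is shorter and more transparent for hypertrees, since the factorization of $F_u$ over grandchild subtrees makes the $\Delta^{-1/(k-1)}$ scale appear immediately from $|\prod_j p_{u_{i,j}}|\le P^{k-1}=1/\Delta$, and it avoids the auxiliary edge-addition operation and the bookkeeping of ``admissible subhypergraphs with one small edge per component.'' The paper's approach, on the other hand, is the natural extension of their general-hypergraph machinery, yields the multivariate statement with no extra work, and is set up in a way that could plausibly be pushed toward broader classes than hypertrees by relaxing the structural invariant on admissible subhypergraphs. Both arrive at the same bound \eqref{eqGraphBoundEQ}; your verification that the hypothesis implies $|\lambda|(1+P)(1+1/\Delta)^{\Delta-1}\le P$ via the identity $(1-1/\Delta^2)^{\Delta-1}(1-P^2)\le 1$ is clean and correct, and your handling of the root (where $d_r\le \Delta$) and of the ``in particular'' clause \eqref{eqGraphBoundEQ2} matches the paper's reasoning.
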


The following example shows that one cannot hope for a polynomial improvement in $\Delta$ to this bound (or to the bound conjectured in Conjecture~\ref{conjLinear}). 

\begin{proposition}
    \label{propExample2}
    For each even $k \ge 4$  there is a family of $k$-uniform hypergraphs of maximum degree $\Delta$ with smallest root $\lam$ satisfying
    \[ |\lam | = O \left(  \left(\frac{\log \Delta}{\Delta}\right)^{\frac{1}{k-1}} \right) \,.\]
\end{proposition}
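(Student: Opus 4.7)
The plan is to realize the example via a ``hyperstar.'' Let $H_{d,k}$ be the $k$-uniform hypergraph consisting of a center $v$ and $d$ hyperedges through $v$, with the $d(k-1)$ other vertices (the ``leaves'') all distinct; its maximum degree is $d$, so we set $d=\Delta$. Conditioning on whether $v\in I$ gives
\[
 Z_{H_{d,k}}(\lam) = (1+\lam)^{d(k-1)} + \lam\bigl((1+\lam)^{k-1} - \lam^{k-1}\bigr)^d,
\]
since when $v\in I$ each edge $e\ni v$ must avoid containing all of its remaining $k-1$ vertices.

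Next I would substitute $\lam=-y$ with $y\in(0,1)$. Here is where the parity of $k$ enters: because $k$ is even, $k-1$ is odd and $(-y)^{k-1}=-y^{k-1}$, so after dividing by $(1-y)^{d(k-1)}$ the zero condition collapses to
\[
 F(y) \bydef y\bigl(1+(y/(1-y))^{k-1}\bigr)^d \;=\; 1.
\]
Since $F(0)=0<1$ and $F(y)\to\infty$ as $y\to 1^-$, the intermediate value theorem produces a real root $y^\star\in(0,1)$, hence a negative real root $\lam^\star=-y^\star$ of $Z_{H_{\Delta,k}}$.

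To locate $y^\star$ I would track leading-order behaviour: for small $y$, $(y/(1-y))^{k-1}\sim y^{k-1}$ and $\log(1+y^{k-1})\sim y^{k-1}$, so $\log F(y)=0$ reads $\log y + d y^{k-1}(1+o(1)) = 0$. The ansatz $y=(c\log d/d)^{1/(k-1)}$ balances both sides at $c=1/(k-1)$, and a short IVT computation (evaluating $F$ at $y_1=(\tfrac{1}{2(k-1)}\log d/d)^{1/(k-1)}$, where it tends to $0$, and at $y_2=(\tfrac{2}{k-1}\log d/d)^{1/(k-1)}$, where it tends to $\infty$) traps $y^\star$ in $[y_1,y_2]$, giving $|\lam^\star|=O((\log\Delta/\Delta)^{1/(k-1)})$.

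The only genuine obstacle is the parity split: for odd $k$ the analogous substitution yields $y(1-(y/(1-y))^{k-1})^d=1$, whose left-hand side is bounded above by $y$ on $(0,1/2)$ and so never reaches $1$ there; the odd case really does need the separate construction behind Proposition~\ref{propExample}. Note finally that $H_{\Delta,k}$ is itself linear (any two edges meet only at $v$), so this same construction shows that the bound of Conjecture~\ref{conjLinear}, if true, is tight up to a $(\log\Delta)^{1/(k-1)}$ factor.
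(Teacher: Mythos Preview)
The proposal is correct and takes essentially the same approach as the paper: both use the $k$-uniform hyperstar, compute its independence polynomial $(1+\lam)^{(k-1)\Delta}+\lam\bigl((1+\lam)^{k-1}-\lam^{k-1}\bigr)^\Delta$, exploit the parity of $k$ to reduce the sign/zero condition to the single equation $y\bigl(1+(y/(1-y))^{k-1}\bigr)^\Delta=1$, and then show this is solved by some $y$ of order $(\log\Delta/\Delta)^{1/(k-1)}$. Your IVT bracketing at $y_1,y_2$ is slightly more explicit than the paper's asymptotic computation (which simply evaluates at $f(\Delta)=(\log\Delta/\Delta)^{1/(k-1)}$ with $f=y/(1-y)$ and observes the expression tends to $-\infty$), but the substance is identical.
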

This lower bound is achieved by the $k$-uniform star with $\Delta$ edges. The details are in Section~\ref{secTreeConstruction}.

Finally we make a conjecture about the  \textit{zero-free locus} of independence polynomials of bounded-degree hypergraphs.   Using the notation from, e.g., \cite{buys2021cayley,de2021zeros,bencs2021limit},  let $\mathcal U_{\Delta}$ denote the maximal simply connected open set in $\mathbb C$  containing $0$ that is zero-free for  independence polynomials of all graphs of maximum-degree $\Delta$. Extending this notation, let  $\mathcal U_{\Delta,k}$ be the same for $k$-uniform hypergraphs (so $\mathcal U_{\Delta} = \mathcal U_{\Delta,2}$); and $\mathcal U_{\Delta, \ge k}$ the same for hypergraphs with edge-size at least $k$.  In particular, Theorem~\ref{thmZeroFreeDisk} shows that $\mathcal U_{\Delta,\ge 2}$ contains a disk of radius $\lam_s(\Delta+1)$.

\begin{conjecture}
\label{conjLocus}
    The zero-free locus of hypergraphs of maximum degree $\Delta$ is identical to the zero-free locus of graphs of maximum degree $\Delta$; that is, $\mathcal U_{\Delta, \ge 2} = \mathcal U_{\Delta}$.
\end{conjecture}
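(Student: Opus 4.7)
The conjecture asserts the equality of two maximal simply connected zero-free neighborhoods of $0$ in $\mathbb{C}$, and the nontrivial containment is $\mathcal U_\Delta \subseteq \mathcal U_{\Delta,\ge 2}$: one must show that no hypergraph of maximum degree $\Delta$ produces a zero inside $\mathcal U_\Delta$ that is not already realized by some graph of maximum degree $\Delta$. My plan is to adapt the Weitz--Peters--Regts blueprint from the graph case: first reduce an arbitrary hypergraph of maximum degree $\Delta$ to a hypertree of the same maximum degree via a self-avoiding walk tree construction, and then analyze the resulting hypertree recursion via complex dynamics, showing that it is dominated by the graph tree recursion whose orbits carve out $\partial \mathcal U_\Delta$.

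Step one is to extend Weitz's self-avoiding walk tree from graphs to hypergraphs. For a rooted hypergraph $(G,v)$ one wants a tree-computable expression for the vertex-marginal ratio $R_v(\lambda)=Z_G(\lambda\mid v\in I)/Z_G(\lambda\mid v\notin I)$ in terms of marginal ratios on a self-avoiding walk hypertree $T_{\mathrm{SAW}}(G,v)$ of maximum degree at most $\Delta$. I would begin with the linear case, unrolling along non-repeating vertex sequences in imitation of the original Weitz argument; the nonlinear case (hyperedges meeting in more than one vertex) would be reduced to the linear case by splitting overlapping hyperedges along their common vertices, which introduces boundary pinnings that are handled naturally in the multivariate setting described after Theorem~\ref{thmZeroFreeDisk} (pinned vertices get activity $0$ or $\infty$).

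Once the problem is reduced to hypertrees, one expresses $R_v$ as the final iterate of a tree recursion built from one rational map per hyperedge incident to $v$. A direct calculation gives
\[
R_v \;=\; \lambda \prod_{e \ni v}\Bigl(1 - \prod_{w \in e \setminus\{v\}} \tfrac{R_w}{1+R_w}\Bigr),
\]
which reduces to the usual graph recursion $R_v=\lambda\prod_{w\sim v}1/(1+R_w)$ when every edge has size $2$, and whose hyperedge factor tends to $1$ as $|e|\to\infty$. The goal is to exhibit, for each $\lambda\in\mathcal U_\Delta$, an invariant region in the Riemann sphere for this recursion that avoids the pole $-1/\lambda$, mirroring the invariant region used by Peters and Regts in the graph setting; zero-freeness of $Z_G(\lambda)$ for every hypergraph of maximum degree $\Delta$ then follows by induction on depth.

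The central obstacle is this last dynamical comparison. For real positive $\lambda$ the hyperedge factor is always closer to $1$ than the graph factor $1/(1+R_w)$, yielding an easy domination and reflecting the fact that hyperedges are weaker constraints than graph edges; but for complex $\lambda$ the two families of rational maps have genuinely different geometry, and no monotonicity is evident. Theorem~\ref{thmTrees} is encouraging evidence that the graph case is extremal, since linear hypertrees of edge-size $k\ge 3$ enjoy a zero-free disk polynomially larger than Shearer's, but upgrading this kind of disk-level domination to the full non-disk region $\mathcal U_\Delta$ is likely to require constructing a new invariant region tailored to the multi-body recursion above, and doing so in a way robust to arbitrary nonlinear overlaps between hyperedges.
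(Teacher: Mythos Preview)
This statement is an open \emph{conjecture} in the paper; the paper does not prove it, and indeed Remark~\ref{rmkD1} explains an obstruction even to the weaker disk statement (replacing $\Delta+1$ by $\Delta$ in Theorem~\ref{thmZeroFreeDisk}). So there is no proof in the paper to compare your proposal against, and your write-up should be read as a research plan rather than a proof. You yourself flag this: you call the dynamical comparison ``the central obstacle'' and say that ``no monotonicity is evident'' for complex $\lambda$. That is precisely the missing idea, and nothing in the proposal supplies it. In particular, the observation that for real positive $\lambda$ the hyperedge factor lies closer to $1$ than the graph factor is irrelevant here, since $\mathcal U_\Delta$ contains $[0,\infty)$ trivially and the content of the conjecture is entirely about complex $\lambda$ off the positive axis.

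There are also gaps earlier in the outline that would need to be closed before the dynamical step even becomes the bottleneck. First, a Weitz self-avoiding walk construction for general hypergraphs that preserves the maximum degree $\Delta$ is not known; your reduction of the nonlinear case to the linear case by ``splitting overlapping hyperedges along their common vertices'' is not specified, and the natural ways of carrying it out (e.g.\ duplicating shared vertices, or replacing an overlap by several smaller edges) can increase vertex degrees or create new zeros via the boundary pinnings you mention (setting an activity to $\infty$ is not innocuous for zero-freeness). Second, the tree recursion you write down,
\[
R_v \;=\; \lambda \prod_{e\ni v}\Bigl(1-\prod_{w\in e\setminus\{v\}}\tfrac{R_w}{1+R_w}\Bigr),
\]
is only valid when the subtrees hanging off the $w$'s within a single edge $e$ are disjoint, i.e.\ for \emph{linear} hypertrees; for nonlinear hypertrees the factors are coupled and this product form fails. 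Finally, note that even the Peters--Regts invariant region for graphs is built specifically around the degree-$(\Delta-1)$ M\"obius map $z\mapsto \lambda/(1+z)^{\Delta-1}$; the hyperedge maps have higher degree in the $R_w$'s and variable arity, so ``mirroring'' their region is not a matter of transport but of constructing something genuinely new. Until these steps are filled in, the proposal is a plausible strategy but not a proof.
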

Conjecture~\ref{conjLocus} in particular implies that one can replace $\lam_s(\Delta+1)$ by $\lam_s(\Delta)$ in Theorem~\ref{thmZeroFreeDisk}.  The next question asks if in fact increasing $k$ strictly enlarges the zero-free locus.  

\begin{question}
    Is it true that for any $\Delta \ge 2$, $k\ge 2$, we have the strict containment
    \[ \mathcal U_{\Delta,k} \subset \mathcal U_{\Delta,k+1}  \, 
    ?\]
\end{question}

\subsection{Algorithms}
To describe the algorithmic consequences of Theorem~\ref{thmZeroFreeDisk}, we recall some basics of approximate counting and sampling.  A complex number $\hat Z$ is an $\eps$-relative approximation to a complex number $Z$ if 
\begin{equation}
    \left| \hat Z -Z  \right | \le \eps | Z| \,.
\end{equation}
An FPTAS (fully polynomial time approximation scheme) for a complex-valued graph polynomial $Z_G$ is an algorithm that, given $G$ and $\eps$, outputs an $\eps$-relative approximation to $Z_G$ and runs in time polynomial in $|V(G)|$ and $1/\eps$. 

\begin{theorem}
    \label{thmAlgorithm}
  For the class of hypergraphs $G$ of maximum degree $\Delta$ and maximum edge size $k$, there is an FPTAS for $Z_G(\lambda)$ when $| \lam | < \lam_s(\Delta+1)$.
\end{theorem}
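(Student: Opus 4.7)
The plan is to use Barvinok's polynomial interpolation method, with Theorem~\ref{thmZeroFreeDisk} providing the analytic input. Fix a target $\lam_* \in \C$ with $|\lam_*| < \lam_s(\Delta+1)$ and choose $\delta > 0$ so that $(1+\delta)|\lam_*| \le \lam_s(\Delta+1)$. The univariate polynomial $p(z) \bydef Z_G(z \lam_*)$ has degree $n = |V(G)|$ and satisfies $p(0)=1$; by Theorem~\ref{thmZeroFreeDisk} it is non-vanishing on the closed disk $\{z\in \C:|z| \le 1+\delta\}$, so $\log p$ is holomorphic on a neighborhood of that disk.

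Next, I would apply Barvinok's standard truncation lemma: the degree-$m$ Taylor polynomial $T_m$ of $\log p$ around $0$ satisfies $|\log p(1) - T_m(1)| = O\bigl(n\,(1+\delta)^{-m}\bigr)$. Choosing $m = \Theta\bigl(\delta^{-1}\log(n/\eps)\bigr) = O(\log(n/\eps))$ and exponentiating $T_m(1)$ yields a multiplicative $\eps$-approximation to $Z_G(\lam_*) = p(1)$. It remains to compute $T_m(1)$ in $\mathrm{poly}(n, 1/\eps)$ time. Here I would use a Patel--Regts-type local identity expressing $[z^j]\log p$ as a weighted sum over \emph{connected} induced sub-hypergraphs of $G$ with at most $j$ vertices, where each term depends only on the sub-hypergraph itself (this reflects the fact that $\log Z_G$ is additive over connected components of $G$). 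For a hypergraph of maximum degree $\Delta$ and maximum edge-size $k$, the primal graph (pairs of vertices lying in a common hyperedge) has maximum degree at most $(k-1)\Delta$, so by Patel--Regts's branching enumeration the number of connected sub-hypergraphs of order $\ell$ containing a fixed vertex is at most $(e(k-1)\Delta)^\ell$, and they can be listed in time $n \cdot (ek\Delta)^{O(\ell)}$. Since $m = O(\log(n/\eps))$, the overall running time is polynomial in $n$ and $1/\eps$.

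The step I expect to require the most care is the combinatorial identity expressing the Taylor coefficients of $\log Z_G$ as a sum over small \emph{connected} sub-hypergraphs with purely local weights. For graphs this is the content of Patel--Regts; in the hypergraph setting one can either pass to the primal graph (and check directly that the resulting local weights still reconstruct the coefficients of $\log Z_G$), or directly invoke a hypergraph cluster-expansion formula following the polymer-model methods that appear throughout recent approximate-counting work. Once this enumeration framework is in place, Barvinok's method converts the zero-free disk from Theorem~\ref{thmZeroFreeDisk} into the claimed FPTAS in the standard way.
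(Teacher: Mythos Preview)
Your proposal is correct and follows essentially the same approach as the paper: Barvinok's truncation lemma applied to the zero-free disk of Theorem~\ref{thmZeroFreeDisk}, followed by efficient computation of the low-order Taylor coefficients of $\log Z_G$. The only difference is that where you sketch a Patel--Regts-style enumeration via the primal graph (or a polymer/cluster expansion), the paper instead directly invokes the result of Liu, Sinclair, and Srivastava~\cite{liu2019ising}, who already carried out exactly this extension of Patel--Regts to $2$-spin models on bounded-degree, bounded-edge-size hypergraphs; citing that lemma spares you the ``step requiring the most care.''
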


The algorithm proceeds by truncating the cluster expansion (the Taylor series for $\log Z_G(\lam)$ around $0$); that is, using Barvinok's polynomial interpolation method. The fact that the exponential of the truncation provides a good approximation to $Z_G$ follows from the zero-freeness result of Theorem~\ref{thmZeroFreeDisk} and Barvinok's approximation lemma~\cite[Lemma 2.2.1]{barvinok2016combinatorics}.   The only additional ingredient is to show that the coefficients of the cluster expansion can be computed efficiently: we need to compute the coefficient of $\lam^r$ in time $\exp (O (r))$ where the implied constant in the $O(\cdot)$ may depend on $\Delta,k,$ and $\lam$, since we treat these as constants. For graphs, Patel and Regts~\cite{patel2016deterministic} showed how to compute these coefficients efficiently; the extension to hypergraphs was done by Liu, Sinclair, Srivastava~\cite{liu2019ising}. We give the details of the algorithm in Section~\ref{secAlgorithms}.  Note that there is no dependence on $k$ in the bound on $\lam$; the running time of the algorithm, however, grows like $n^{O(\log k +\log \Delta)}$ and so is only polynomial-time when $k$ and $\Delta$ are constants. 

Previous algorithmic work on hypergraph independent sets has primarily focused on the case $\lam=1$; that is, approximately counting the number of independent sets in a hypergraph (and sampling approximately uniformly).  The algorithmic results here generally fall into two categories: randomized approximation algorithms (yielding an FPRAS) based on Markov chain Monte Carlo and deterministic approximation algorithms (yielding an FPTAS) based on the method of correlation decay pioneered by Weitz~\cite{weitz2006counting}.  Examples of the first type of result include~\cite{bordewich2008path,hermon2019rapid,qiu2022perfect}; while the second set of results include~\cite{liu2014fptas,bezakova2019approximation}.  In~\cite{liu2014fptas}, Liu and Lu show that there is an FPTAS for counting independent sets in hypergraphs of maximum degree $5$, matching the bound for independent sets in graphs due to Weitz~\cite{weitz2006counting} (like Theorem~\ref{thmZeroFreeDisk}, this says things are no worse for hypergraphs that they are for graphs).  In~\cite{bezakova2019approximation}, Bez{\'a}kov{\'a},  Galanis,  Goldberg, and {\v{S}}tefankovi{\v{c}} study the case when $\lam=1$ and $k \ge 3$; in this case they can surpass the graph bound; e.g., giving an FPTAS for $k$-uniform hypergraphs when $\Delta \le 6$ (as opposed to $\Delta \le 5$ for graphs). Moreover they give an FPTAS when $\Delta \le k$, a  large improvement over the graph bound when $k$ is large.  Finally, there has been significant recent interest in a generalization of this counting problem, that of approximately counting the number of solutions to $k$-CNF formulas. 
 The case of independent sets in $k$-uniform hypergraphs is the special case of counting solutions to monotone $k$-CNF formulas. 
 Work on this problem includes~\cite{guo2019uniform,moitra2019approximate,feng2021fast,jain2022towards}.  It is an interesting direction to explore the connections between these results and the results and questions in the current paper.

\section{Zero-free regions for bounded-degree hypergraphs} 
\label{sec-c-over-Delta}

We begin by generalizing the independence polynomial to the multivariate case.  Let ${\bl}=(\lam_v: v \in V)$ be a collection of (possibly complex) {\it activities} on the vertices of $G$. The multivariate  independence polynomial of $G$ at $e$ is
$$
Z_{G}({\bl}) = \sum_{I \in {\mathcal I}(G)} \prod_{v \in I} \lam_v
$$
where ${\mathcal I}(G)$ is the set of all independent sets of $G$. If all $\lam_v$ have the same value, $\lam$ say, then $Z_{G}(\bl)$ is the independence  polynomial defined in Section~\ref{secIntro}. 

The main goal of this section is to prove the following zero-freeness result for the mulitivariate independence polynomial of bounded-degree hypergraph. 
\begin{theorem}\label{thm:graph_bound}
Let $G$ be a hypergraph of maximum degree $\Delta$. If $|\lam_v|\leq\frac{\Delta^\Delta}{(\Delta+1)^{\Delta+1}}$ for all $v\in V$, then
\[
|Z_{G}(\bl)| \geq \left(1-\frac{1}{\Delta+1}\right)^{|V|} > 0.
\]
\end{theorem}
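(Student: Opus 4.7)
The plan is to prove, by induction on $|V(H)|$, the following stronger inductive statement: for every hypergraph $H$ of maximum degree at most $\Delta$ (allowing edges of size $\geq 1$, where a singleton edge $\{u\}$ simply forbids $u$ from every independent set) with $|\lam_v|\leq\lam_s(\Delta+1)$ for all $v$, and every $v\in V(H)$,
\[
|T_v|\leq\frac{1}{\Delta+1},\qquad T_v\ :=\ \frac{Z_H(\bl)-Z_{H-v}(\bl)}{Z_{H-v}(\bl)}\ =\ \lam_v\frac{Z_{H^v}(\bl)}{Z_{H-v}(\bl)},
\]
where $H-v$ is vertex-deletion and $H^v$ has vertex set $V(H)\setminus\{v\}$ and edge set $E(H-v)\cup\{e\setminus\{v\}:e\in E(H),\ e\ni v\}$. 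Granting this, $|Z_H/Z_{H-v}|=|1+T_v|\geq\Delta/(\Delta+1)$ for every $v$, and telescoping along any ordering of $V$ gives $|Z_G(\bl)|\geq(\Delta/(\Delta+1))^{|V|}$, which is Theorem~\ref{thm:graph_bound}.

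The base case $|V(H)|\leq 1$ is immediate. For the inductive step at $(H,v)$: if $\{v\}\in E(H)$ then $Z_H=Z_{H-v}$ and $T_v=0$. Otherwise, enumerate the edges of $H$ through $v$ as $e_1,\dots,e_d$ (with $d\leq\Delta$ and $|e_i|\geq 2$), set $f_i:=e_i\setminus\{v\}$, and build the edge-addition chain $H_0:=H-v$, $H_j:=H_{j-1}+f_j$, noting $H_d=H^v$. A direct degree count (the edges of $H_j$ at $u\neq v$ are the edges of $H$ through $u$ but not $v$, plus those $f_i$ with $i\leq j$ that contain $u$) confirms $\deg_{H_j}(u)\leq\deg_H(u)\leq\Delta$ for all $j$. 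Then
\[
\frac{Z_{H^v}}{Z_{H-v}}\ =\ \prod_{j=1}^{d}\frac{Z_{H_j}}{Z_{H_{j-1}}},
\]
and it suffices to show $|Z_{H_j}/Z_{H_{j-1}}|\leq(\Delta+1)/\Delta$ for each $j$: combined with $|\lam_v|\leq\lam_s(\Delta+1)=\frac{1}{\Delta+1}\bigl(\frac{\Delta}{\Delta+1}\bigr)^{\Delta}$, this yields
\[
|T_v|\ \leq\ \lam_s(\Delta+1)\Bigl(\tfrac{\Delta+1}{\Delta}\Bigr)^{d}\ =\ \tfrac{1}{\Delta+1}\Bigl(\tfrac{\Delta}{\Delta+1}\Bigr)^{\Delta-d}\ \leq\ \tfrac{1}{\Delta+1}
\]
since $d\leq\Delta$.

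To prove the per-factor subclaim, use the edge-addition identity $Z_{H_j}=Z_{H_{j-1}}-\prod_{u\in f_j}\lam_u\cdot Z_{H_{j-1}/f_j}$, i.e. $Z_{H_j}/Z_{H_{j-1}}=1-A_j$ with $A_j:=Z_{H_{j-1},\,f_j\subseteq I}/Z_{H_{j-1}}$. If $f_j$ already contains an edge of $H_{j-1}$, or if some $\lam_u=0$ for $u\in f_j$, then $A_j=0$ and the factor is $1$; assume henceforth that neither degeneracy occurs. Fix an ordering $f_j=\{u_1,\dots,u_k\}$, let $S_\ell:=\{u_1,\dots,u_\ell\}$, and set $H^{(\ell)}:=H_{j-1}/S_{\ell-1}$. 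Telescoping the conditional partition-function ratio vertex-by-vertex gives
\[
A_j\ =\ \prod_{\ell=1}^{k}\lam_{u_\ell}\,\frac{Z_{(H^{(\ell)})^{u_\ell}}}{Z_{H^{(\ell)}}}\ =\ \prod_{\ell=1}^{k}\Bigl(1-\tfrac{1}{R_\ell}\Bigr),\qquad R_\ell:=\frac{Z_{H^{(\ell)}}}{Z_{H^{(\ell)}-u_\ell}}.
\]
Each $H^{(\ell)}$ has $|V(H)|-\ell<|V(H)|$ vertices and maximum degree $\leq\Delta$ (contraction does not increase degrees), so the inductive hypothesis yields $|R_\ell-1|=|T_{u_\ell}^{H^{(\ell)}}|\leq 1/(\Delta+1)$, hence $|R_\ell|\geq\Delta/(\Delta+1)$ and $|1-1/R_\ell|=|R_\ell-1|/|R_\ell|\leq 1/\Delta$. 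Since $k=|f_j|\geq 1$ and $1/\Delta\leq 1$, this gives $|A_j|\leq(1/\Delta)^{k}\leq 1/\Delta$, and therefore $|Z_{H_j}/Z_{H_{j-1}}|\leq 1+1/\Delta=(\Delta+1)/\Delta$, completing the induction.

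The main obstacle --- and the reason the argument is more delicate than Shearer's classical graph proof --- is that for graphs $G^v$ coincides with $G-N[v]$, so the edge-addition step above reduces trivially to a sequence of vertex-deletions already covered by the inductive statement; in the hypergraph case the pseudo-edges $f_i$ can have size $\geq 2$, and the inner telescoping over the vertices of each $f_i$ is the technical step that reduces bounding the new ratio $Z_{H_j}/Z_{H_{j-1}}$ back to the same inductive bound on the strictly smaller contracted hypergraphs $H^{(\ell)}$.
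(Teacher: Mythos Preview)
Your proof is correct. Both arguments follow the Shearer-style plan of bounding, by induction on $|V|$, the vertex-deletion ratio $Z_H/Z_{H-v}$ using the identities \eqref{eq:vertex_delete} and \eqref{eq:edge_add}, but the organization differs in a way worth recording. The paper carries \emph{two} inductive statements in parallel --- the vertex-deletion bound \eqref{eq:vertex_induction} and a separate edge-addition bound \eqref{eq:hyperedge_induction} with size-dependent parameters $s_j$ --- and proves the latter by relating $Z_{M+A}$ to $Z_{M/A}$ along a chain that first deletes $A\cup C(A)$ and then adds back all the shrunk edges, invoking both hypotheses at each step. You carry only the single vertex-deletion bound: when the edge-addition factor $Z_{H_j}/Z_{H_{j-1}}=1-A_j$ arises, you instead bound $|A_j|$ directly via the product identity $A_j=\prod_{\ell}(1-1/R_\ell)$ obtained by contracting the vertices of $f_j$ one at a time, each factor $|1-1/R_\ell|\le 1/\Delta$ being controlled by the vertex-deletion hypothesis on the strictly smaller contracted hypergraph $H^{(\ell)}$. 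For Theorem~\ref{thm:graph_bound} your single-statement induction is cleaner and sidesteps the $n_{ab}$ bookkeeping; the paper's two-statement framework with distinct $s_j$'s is more flexible, and that flexibility is exactly what is exploited in the linear-hypertree result (Theorem~\ref{thm:linearTrees_bound}), where distinguishing the sizes of added edges is essential. One small point to make explicit in your write-up: the nonvanishing of the denominators $Z_{H_{j-1}}$ and $Z_{H^{(\ell)}}$ should be folded into the inductive package (it follows immediately from the bound $|T_v|\le 1/(\Delta+1)$ on strictly smaller hypergraphs via the same telescoping).
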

Note that $(\Delta^\Delta)/(\Delta+1)^{(\Delta+1)} \sim e/\Delta$ as $\Delta \rightarrow \infty$. As discussed above, this theorem is tight apart from the possible substitution of $\Delta$ for $\Delta+1$ in the bound.

The proof of Theorem~\ref{thm:graph_bound} follows the broad outline of the proof of Shearer's theorem in~\cite{shearer1985problem,scott2005repulsive}.  While the proof for graphs involves the operation of removing vertices from a graph, the extension to hypergraphs is more involved: the operations we perform include removing vertices from a hypergraph as well as shrinking  edges. To keep track of these operations and to be explicit about edge and vertex sets, we will use the notation $Z_{V,E}(\bl)$ for $Z_G(\bl)$ where $G= (V,E)$. 

For $A\subset V$ with $A \neq \emptyset$, we define the following operations on the edge set $E$, whose utility will become apparent when we present \eqref{id:delete-v} and \eqref{id:add-partial-edge}, the basic deletion/contraction identities for $Z_{V,E}(\bl)$:
\begin{description}
\item[Deletion] $E-A$ is the set of edges that avoid $A$, that is, 
$$
E-A =\{e \in E: e \cap A =\emptyset\}.
$$
For $v \in V$ we write $E-v$ for $E-\{v\}$. 

\item[Contraction] $E/A$ is the set of edges of size at least $2$ that are created by deleting the elements of $A$ from all the edges in $E$, that is,
$$
E/A=(E-A)\cup \{e\setminus A:e\cap A\neq\emptyset,|e\setminus A|\geq 2\}.
$$
For $v \in V$ we write $E/v$ for $E/\{v\}$.
\item[Closure] $C(A)$ is the set of vertices outside $A$ that, together with some non-empty subset of $A$, form an edge in $E$; in other words, each of which forms an edge with any (nonempty) subset of $A$. In other words, it is the set of edges of size $1$ that are created by deleting the elements of $A$ from all the edges in $E$. Formally,
$$
C(A)=\{v: \text{there is } e\in E \text{ with } e\setminus A=\{v\}\}.
$$
For $v \in V$ we write $C(v)$ for $C(\{v\})$.
\item[Edge addition] $E+A$ is obtained from $E$ by including also $A$ as an edge, i.e. $E+A=E \cup \{A\}$.
\end{description}

We will use two fundamental identities relating the independence polynomial of a hypergraph to that of some smaller hypergraphs. Note that here (and throughout) we abuse notation somewhat: if $\bl$ is a set of weights indexed by a set $W$, and $W' \subseteq W$, then we write $Z_{W',E'}(\bl)$ when we actually mean $Z_{W',E'}(\bl')$, with $\bl'$ the restriction of the vector $\bl$ to the index set $W'$. 

Firstly we have that for any $v \in V$,
\begin{equation} \label{id:delete-v}
Z_{V,E}(\bl)=Z_{V\setminus\{v\},E-v}(\bl)+w_vZ_{V \setminus(\{v\}\cup C(v)),E/v}(\bl) \,.
\end{equation}
The identity follows by first considering  those independent sets in $G$ that do not contain $v$ and then those that do. 

Secondly, for all $A \subseteq V$ such that there is no $e \in E$ with $e \subseteq A$ we have
\begin{equation} \label{id:add-partial-edge}
Z_{V,E+A}(\bl)=Z_{V,E}(\bl)-Z_{V\setminus (A \cup C(A)),E/A}(\bl)\prod_{x \in A}\lam_x.
\end{equation}
This identity follow by observing that all independent sets in $(V,E)$ are independent sets in $(V,E+A)$, except those that contain all of $A$; and the extensions of $A$ to an independent set in $(V,E)$ are precisely the independent sets in $V\setminus (A \cup C(A)),E/A$. Note that if there is an $e \in E$ with $e \subseteq A$ then \eqref{id:add-partial-edge} becomes the simpler
$$
Z_{V,E+A}(\bl)=Z_{V,E}(\bl).
$$

For brevity, in what follows we will write 
\begin{itemize}
\item $G-v$ for the hypergraph $(V\setminus\{v\},E-v)$, and $G-A$ for $(V\setminus A,E-A)$
\item $G/v$ for $(V \setminus(\{v\}\cup C(v),E/v)$, 
\item $G+A$ for $(V,E+A)$ and 
\item $G/A$ for $(V\setminus (A \cup C(A)),E/A)$.
\end{itemize}
With this notation, \eqref{id:delete-v} and \eqref{id:add-partial-edge} become
\begin{equation}\label{eq:vertex_delete}
Z_G(\bl)=Z_{G-v}(\bl)+\lam_vZ_{G/v}(\bl)
\end{equation}
and
\begin{equation}\label{eq:edge_add}
Z_G(\bl)=Z_{G+A}(\bl)+Z_{G/A}(\bl)\prod_{x \in A}\lam_x.
\end{equation}	

We define a collection of {\it admissible subhypergraphs} of $G=(V,E)$ as follows: $G$ itself is admissible, and if $M=(V_M,E_M)$ is some admissible subhypergraph of $G$, then so are each of
\begin{itemize}
\item $M-v$ and $M/v$ for any $v \in V_M$,
\item $M+A$ and $M/A$ where $A\subseteq V_M$ is any proper subset of an $e \in E_G$.
\end{itemize}

We are now ready to present the proof of Theorem \ref{thm:graph_bound}.
	
\begin{proof}[Proof of Theorem~\ref{thm:graph_bound}]
The overall plan is to show that as long as the hypothesized condition on $\bl$ holds, we have that if $M$ is any non-empty admissible subhypergraph of $G$ and $v$ is any vertex in $V_M$ then 
$$
\left|\frac{Z_M(\bl)}{Z_{M-v}(\bl)}\right| \geq 1-s
$$
where $s<1$ may be chosen to be independent $M$ and $v$. Iterating this over an ordering $v_1, v_2, \ldots, v_{|V(G)|} $ of the vertices of $G$ then yields, via a telescoping product,
\begin{equation} \label{eq-tele-gen}
|Z_M(\bl)| = \prod_{i=1}^{|V(G)|} \left|\frac{Z_{G-\{v_1, \ldots, v_{i-1}\}}(\bl)}{Z_{G-\{v_1, \ldots, v_i\}}(\bl)}\right| \geq (1-s)^{|V(G)|} > 0. 
\end{equation}
To do this,  we will need to control how the independence polynomial changes in going from $M$ to $M-v$. It will turn out that in tandem with this we will also need to control how it changes in going from $M$ to $M+A$. To achieve both of these tasks, we will carry out a parallel induction. 

To state the induction hypothesis precisely, we introduce some notation. For an admissible subhypergraph $M=(V_M,E_M)$, and a non-empty subset $A \subseteq V_M$ with $|A|=a$, we define, for each positive integer $b$, the quantity
$$
n_{ab} = \#\left({S \subseteq V_M,~S \cap A = \emptyset,~|S|=b \text{ such that }  \atop S \cup T \in E_M \text{ for some non-empty } T \subseteq A}\right)
$$
when $a=1$ or $a>1$ and $b>1$. Note that if $A=\{v\}$ (so $a=1$) then $n_{ab}$ is simply the number of edges in $M$ of size $b+1$ that include $v$.

For $b=1$ and $a>1$, we slightly modify this to
$$n_{a1}=\#\left({S \subseteq V_M,~S \cap A = \emptyset,~|S|=1 \text{ such that }  \atop S \cup T \in E_M \text{ for some non-empty } T \subseteq A}\right)+|A|,$$
i.e. $n_{a1}=\#\left(2\text{-edges incident to }A\right)+|A|$.

In the notation we suppress the dependence of $n_{ab}$ on $M$ and $A$. 

We are now ready to state the main result of this section precisely. Let $R>0$ and $s_j \in (0,1)$, $j=1, 2, \ldots$ be constants that satisfy the following conditions for every admissible subhypergraph $M=(V_M,E_M)$ of $G$: 
\begin{itemize}
\item For every $A \subseteq V_M$ with $|A|=1$ (i.e., every $v \in V_M$) we have
\begin{equation} \label{R-bound-1}
R \leq s_1 \prod_{i \geq 1} (1-s_i)^{n_{1i}},
\end{equation}
\item and for every $A \subseteq M$ with $|A|=j\geq 2$ such that $M+A$ is admissible we have
\begin{equation} \label{R-bound-j}
R^j \leq s_j \prod_{i \geq 1} (1-s_i)^{n_{ji}}. 
\end{equation}
\end{itemize}
Here, finally, is the statement that we will prove. Let $\bl$ be such that $|w_x| \leq R$ for all $x \in V$. Let $M=(V_M,E_M)$ be an admissible subhypergraph of $G$. For $v \in V_M$ we have 
\begin{equation}\label{eq:vertex_induction}
|Z_M(\bl)|\geq (1-s_1)|Z_{M-v}(\bl)|~\text{({\it vertex deletion identity})}
\end{equation}
while for $A \subseteq M$ with $|A|=j\geq 2$ such that $M+A$ is admissible we have
\begin{equation}\label{eq:hyperedge_induction}
|Z_M(\bl)|\geq (1-s_j)|Z_{M+A}(\bl)|~\text{({\it edge addition identity}).}
\end{equation}

Before proving \eqref{eq:vertex_induction} and \eqref{eq:hyperedge_induction}, we show that it is enough to complete the proof of Theorem \ref{thm:graph_bound}. The key observations are that for any admissible $M$ and $v \in V_M$ we have
$$
\sum_{b \geq 1} n_{1b} = {\rm deg}_M(v) \leq \Delta
$$
and for any admissible $M$ and $A \subseteq M$ with $|A|=a\geq 2$ such that $M+A$ is admissible we have
$$
\sum_{b \geq 1} n_{ab} \leq a\Delta.
$$
(Indeed, consider $x \in A$. This is in at most $\Delta-1$ edges of $M$, since $M+A$ is admissible which means $x$ has degree at most $\Delta$ in $M+A$. Thus there are at most $\Delta-1$ $S$'s disjoint from $A$ such that $S \cup T \in E_M$ with $T \subseteq A$ and $x \in T$. Since $|A|=a$, it follows that there are at most $a(\Delta-1)$ $S$'s disjoint from $A$ such that $S \cup T \in E_M$ with $T \subseteq A$ and $T \neq \emptyset$. We now add the $a$ vertices of $|A|$ deleted to get $a\Delta$ as a bound.)

It follows that if let all $s_i$'s have a common value, $s$ say, then  \eqref{R-bound-1} and \eqref{R-bound-j} are implied by
$$
R^j \leq s  (1-s)^{j\Delta} \text{ or } R \leq s^{1/j}(1-s)^\Delta
$$
for $j=1, 2, \ldots$. Since $s \in (0,1)$, all of these are implied by $R \leq s(1-s)^\Delta$. We may now take $s=1/(\Delta +1)$, leading to  $R=\Delta^\Delta/(\Delta+1)^{(\Delta+1)}$. Theorem \ref{thm:graph_bound} now follows via the telescoping product \eqref{eq-tele-gen}.

		Now, to prove \eqref{eq:vertex_induction} and \eqref{eq:hyperedge_induction}, we will proceed by induction on $|V_M|$, showing firstly that an $|M|=n+1$ instance of \eqref{eq:vertex_induction} can be deduced from \eqref{eq:vertex_induction} and \eqref{eq:hyperedge_induction} for $|M|\leq n$, and secondly that an $|M|=n+1$ instance of \eqref{eq:hyperedge_induction} can be deduced from \eqref{eq:vertex_induction} and \eqref{eq:hyperedge_induction} for $|M|\leq n$ as well as \eqref{eq:vertex_induction} for $|M|=n+1$. 
		
		As a base case we take $M=\varnothing$, where both \eqref{eq:vertex_induction} and \eqref{eq:hyperedge_induction} are vacuously true. 
		
		For the first of the induction steps, assume that \eqref{eq:vertex_induction} and \eqref{eq:hyperedge_induction} both hold for all admissible $M'$ with $|V_{M'}|\leq n$, and let $M=(V_M,E_M)$ be an admissible subhypergraph of $G$ with $|V_M|=n+1$ vertices. Let $v$ be any vertex of $M$. To show that \eqref{eq:vertex_induction} holds, we begin by applying identity \eqref{eq:vertex_delete}:
		\[
		Z_M(\bl)=Z_{M-v}(\bl)+\lam_vZ_{M/v}(\bl).
		\]
		Via the reverse triangle inequality and the assumption $|\lam_v|\leq R$, this gives
		\begin{equation}\label{eq:triangle1}
			|Z_M(\bl)|\geq |Z_{M-v}(\bl)|-R|Z_{M/v}(\bl)|.
		\end{equation}
		To extract our desired lower bound on $|Z_M(\bl)|$ we will use the induction hypotheses (for admissible subhypergraphs with at most $n$ vertices) to bound $|Z_{M/v}(\bl)|$ in terms of $|Z_{M-v}(\bl)|$. Observe that $M/v$ may be obtained from $M-v$ by a sequence of at most $\Delta$ many vertex deletions and edge additions, and that at each step, we obtain an admissible subhypergraph of $G$ (allowing us to repeatedly apply the induction hypotheses). 
		More explicitly, beginning with $M-v$, we first delete all 2-neighbors of $v$ (that is, vertices $u$ such that $\{u,v\} \in E_{M-v}$) to obtain $M-(\{v\} \cup C(v))$, a total of $|C(v)|~(=n_{11})$ many applications of vertex deletion. By \eqref{eq:vertex_induction}, this gives 
		\begin{equation}\label{eq:induct1}
			|Z_{M-v}|\geq (1-s_1)^{n_{11}}\cdot |Z_{M-(\{v\} \cup C(v))}|.
		\end{equation}
		Then, to obtain $M/v$ from $M-(\{v\} \cup C(v))$, we add a $j$-edge $A$ for each $\{v\} \cup A \in E_M$ for all $j \geq 2$. (Since $v$ is not a vertex in $M-(\{v\} \cup C(v))$, each of these edge additions produces an admissible subhypergraph of $G$). The number of $j$-edge additions performed at this step is at most the number of $(j+1)$-edges in $M$ that include $v$, which recall we have denoted $n_{1j}$. 
		By \eqref{eq:hyperedge_induction}, this gives
		\begin{equation}\label{eq:induct2}
			|Z_{M-(\{v\}\cup C(v))}|\geq \prod_{i\geq 2}(1-s_i)^{n_{1i}}\cdot |Z_{M/v}|.
		\end{equation}
		So combining \eqref{eq:induct1} and \eqref{eq:induct2}, we obtain the following relationship between $|Z_{M/v}(\bl)|$ and $|Z_{M-v}(\bl)|$:
		\begin{equation} \label{induct3}
			|Z_{M-v}|\geq \prod_{i\geq 1}(1-s_i)^{n_{1i}}\cdot |Z_{M/v}|.
		\end{equation}
		And combining \eqref{induct3} with \eqref{eq:triangle1}, we see that
		\begin{align} \label{completing-vertex-induction} 
			|Z_M(\bl)|
			&\geq |Z_{M-v}(\bl)|\left(1-\frac{R}{\prod_{i\geq 1}(1-s_{i})^{n_{1i}}}\right).
		\end{align}
		Thus our induction hypothesis \eqref{eq:vertex_induction} is satisfied for $M$ (and $v$) as long as the inequality
		\begin{equation}\label{eq:inductioneq1}
			\left(1-\frac{R}{\prod_{i\geq 1}(1-s_{i})^{n_{1i}}}\right)\geq (1-s_1)
		\end{equation}
		is satisfied; but this is equivalent to \eqref{R-bound-1}.

		Now let us turn to the second part of the induction step, verifying \eqref{eq:hyperedge_induction}. Again, let $M$ be any admissible subhypergraph of $G$ with $n+1$ vertices, and consider any edge $\{v_1,\dots,v_{\ell-j},x_1,\dots,x_j\}$ in $G$ with $x_1,\dots,x_j\in V_M$ and $v_1,\dots,v_{\ell-j}\not\in V_M$. (So here we are taking $A=\{x_1,\dots,x_j\}$; this will be convenient as we will be dealing with the elements of $A$ one after another.) 
		First we dispense with a somewhat trivial case: if $\{x_1,\dots,x_j\}$ contains a hyperedge of $M$ 
		then \eqref{eq:hyperedge_induction} is automatically satisfied, since $Z_M(\bl) = Z_{M+\{x_1,\dots,x_j\}}(\bl)$, and so $|Z_M(\bl)| \geq (1-s_j)\, |Z_{M+\{x_1,\dots,x_j\}}(\bl)|$ for any $s_j\in(0,1)$. So from here on we may assume that no $e\in E_M$ is contained in $\{x_1,\dots,x_j\}$. 
		
		We begin by applying identity \eqref{eq:edge_add} to get
		\[
		Z_M(\bl)=Z_{M+\{x_1,\dots,x_j\}}(\bl)+\lam_{x_1}\dots \lam_{x_j}Z_{M/\{x_1,\dots,x_j\}}(\bl).
		\]
		As before, using the reverse triangle inequality and noting that $|\lam_{x_i}|\leq R$ for all $i$, we obtain
		\begin{equation}\label{eq:triangle2}
			|Z_M(\bl)|\geq |Z_{M+\{x_1,\dots,x_j\}}(\bl)|-R^j|Z_{M/\{x_1,\dots,x_j\}}(\bl)|.
		\end{equation}
		To obtain a lower bound on $|Z_M(\bl)|$, we will apply our induction hypotheses to bound $|Z_{M/\{x_1,\dots,x_j\}}(\bl)|$ in terms of $|Z_{M+\{x_1,\dots,x_j\}}(\bl)|$. Notice that $M/\{x_1,\dots,x_j\}$ may be obtained from $M+\{x_1,\dots,x_j\}$ by the following sequence of steps: Starting from $M+\{x_1,\dots,x_j\}$, we perform the vertex deletion operation $j+|C(\{x_1,\dots,x_j\})|~(=n_{j1})$ many times. Then we add (as edges) all the $i$-sets $\{a_1,\ldots, a_i\}$ such that $a_1, \ldots, a_i \in V_M$ and $\{x_{k_1},\dots, x_{k_t},a_1,\ldots, a_i\} \in E_M$ for some indices $k_1,\dots, k_t$; note that there are $n_{ji}$ such $i$-sets for each possible $i$.

		Now by \eqref{eq:vertex_induction} and \eqref{eq:hyperedge_induction} we have 
		\begin{equation} \label{inq:secondind}
			|Z_{M+\{x_1,\dots,x_j\}}|\geq \prod_{i\geq 1}(1-s_{i})^{n_{ji}}\cdot |Z_{M/\{x_1,\dots,x_j\}}|.
		\end{equation}
		(Notice that the first application of \eqref{eq:vertex_induction} in this sequence is to delete a vertex from ${M+\{x_1,\dots,x_j\}}$, which has $n+1$ vertices. This is a valid application, since we are assuming the $n+1$ case of \eqref{eq:vertex_induction} of the induction hypothesis.)

		Combining \eqref{inq:secondind} with \eqref{eq:triangle2} yields 
		\begin{align*}
			|Z_M(\bl)|
			&\geq |Z_{M+\{x_1,\dots,x_j\}}|\left(1-\frac{R^j}{\prod_{i\geq 1}(1-s_i)^{n_{ji}}}\right).
		\end{align*}
		So we obtain \eqref{eq:hyperedge_induction} for $M$ as long as 
		\begin{equation*}
			\left(1-\frac{R^j}{\prod_{i\geq 1}(1-s_i)^{n_{ji}}}\right)\geq (1-s_j),
		\end{equation*}
		which is equivalent to \eqref{R-bound-j}. This completes the induction.
			\end{proof}

\begin{remark}
\label{rmkD1}
		    There is a trick that allows one to replace $\Delta+1$ by $\Delta$ in the graph case, achieving the optimal bound: by  starting from a vertex of degree $\Delta$, every vertex edited by the subsequent application of the deletion-contraction identity has degree strictly less that $\Delta$. This is no longer true for hypergraphs. When we start from a vertex of degree $\Delta$, we might keep creating and editing vertices of total degree (or even worse, graph-degree) $\Delta$.
\end{remark}

\section{Linear hypertrees} \label{sec-genTree}

 In the special case of linear hypertrees, we  improve the bound in Theorem 8 by using the same inductive argument, but taking into account the special structure of the hypergraphs obtained during the deletion/contraction operations.  By choosing the vertex/edge appropriately at each step, we will ensure that the resulting hypergraphs have at most one small (few vertices) edge in every connected component. Since small edges are more `costly' to alter, as we will see below, this will yield the desired improvement in the final bound.  The following is the multivariate extension of Theorem~\ref{thmTrees}.

\begin{theorem}\label{thm:linearTrees_bound}
	For each $k\geq 3$, 
 the following holds.
 For any $k$-uniform, linear hypertree $G=(V,E)$ with maximum degree $\Delta$, if 
 \begin{equation}
    \label{eqGraphBoundEQthm2}
     |\lam_v| \leq \left(\frac{\Delta-1}{\Delta}\right)^{\Delta-1} \left( 1- \Delta^{ - \frac{1}{k-1}}   \right)\Delta^{ - \frac{1}{k-1}}
     \end{equation}
 for all $v\in V$, then
	\[
	|Z_G(\bl)| \geq \left(1-\Delta^{- \frac{1}{k-1}}\right)^{|V|} > 0.
	\]
 Moreover, $|\lam_v| \le \frac{\log 2}{2k} \Delta^{ - \frac{1}{k-1}}$ implies~\eqref{eqGraphBoundEQthm2}.
\end{theorem}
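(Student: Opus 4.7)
The plan is to carry out the same parallel induction as in the proof of Theorem~\ref{thm:graph_bound}, with parameters tailored to the linear $k$-uniform setting. Specifically, set
\[
t \bydef \Delta^{-\frac{1}{k-1}},\qquad s_j = t \ \text{for } 1 \le j \le k-2,\qquad s_{k-1} = t^{k-1} = \frac{1}{\Delta}.
\]
Since $s_1 = t$, the telescoping bound \eqref{eq-tele-gen} immediately delivers $|Z_G(\bl)| \ge (1-t)^{|V|}$, so the remaining task is to verify \eqref{R-bound-1} and \eqref{R-bound-j} with $R$ equal to the right-hand side of \eqref{eqGraphBoundEQthm2}, using the improved structural counts available for linear hypertrees.

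The key idea is to maintain, along every branch of the induction, the invariant that each admissible subhypergraph $M$ encountered is a disjoint union of linear hypertrees with at most one \emph{short} edge (an edge of size strictly less than $k$) per connected component. The initial hypergraph $G$ satisfies this trivially. Under this invariant, if $v$ is a degree-$1$ vertex of a leaf edge in $M$, then $v$ is incident to at most $\Delta-1$ full $k$-edges and at most one short edge, so \eqref{R-bound-1} becomes
\[
R \;\le\; s_1 (1-s_{k-1})^{\Delta-1} (1-t) \;=\; t(1-t)\left(1-\tfrac{1}{\Delta}\right)^{\Delta-1},
\]
exactly matching \eqref{eqGraphBoundEQthm2}. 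For $j$-edge addition with $2\le j\le k-1$, linearity severely restricts the counts $n_{j,b}$ (any edge $e'\in E_M$ meets an admissible $A$ in at most one vertex), and a direct substitution shows that \eqref{R-bound-j} is implied by \eqref{R-bound-1} for the same $R$.

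The main obstacle will be verifying that the invariant can actually be propagated throughout the inner induction, where \eqref{eq:vertex_induction} and \eqref{eq:hyperedge_induction} are applied repeatedly to intermediate subhypergraphs. The central observation is that when $v$ is a degree-$1$ vertex, the step $M-v \to M/v$ introduces exactly one new short edge (the residual $e\setminus\{v\}$ of the single $k$-edge $e$ through $v$), and by linearity it is confined to one component. By always processing the vertices of any pre-existing short edge to completion before moving on, and arranging the remaining deletions so that $v$ always lies in a short-edge-free component, the invariant can be carried through every intermediate step; the analogous argument handles the edge-addition branch of the induction.

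Finally, \eqref{eqGraphBoundEQ2} follows from \eqref{eqGraphBoundEQthm2} by routine estimates: $(1-1/\Delta)^{\Delta-1}\ge 1/e$ for all $\Delta\ge 2$ (with the sharper value $1/2$ available when $\Delta=2$), and the factor $1-t = 1-\Delta^{-1/(k-1)}$ is bounded below by a short case analysis on whether $x \bydef \log\Delta/(k-1)$ is at least $1$, combined with the elementary inequality $1-e^{-x}\ge x(1-x/2)$ in the small-$x$ regime. These combine to give $(1-1/\Delta)^{\Delta-1}(1-t)\ge \log(2)/(2k)$ and hence the claimed form of the bound.
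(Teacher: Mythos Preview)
Your plan --- rerun the parallel induction of Theorem~\ref{thm:graph_bound} restricted to admissible subhypergraphs with at most one short edge per component, using the parameters $s_1=\cdots=s_{k-2}=t=\Delta^{-1/(k-1)}$ and $s_{k-1}=1/\Delta$ --- is exactly the paper's approach. Two of your explicit claims, however, are wrong and would derail the argument if followed literally.

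First, the sentence ``if $v$ is a degree-$1$ vertex of a leaf edge in $M$, then $v$ is incident to at most $\Delta-1$ full $k$-edges and at most one short edge'' is self-contradictory: a degree-$1$ vertex lies in exactly one edge. More to the point, you cannot restrict the inner induction to degree-$1$ vertices, because the edge-addition branch forces you to delete the vertices $x_1,\dots,x_j$ of the added $j$-edge, and these typically have degree up to $\Delta$. The workable restriction (the one the paper uses) is that $v$ lie in the unique short edge of its component, or in a component with no short edge at all; under the invariant this gives at most one short incident edge and at most $\Delta$ incident edges total, and the resulting vertex-deletion constraint simplifies to $R\le t\,(1-1/\Delta)^{\Delta}$, which is \emph{weaker} than~\eqref{eqGraphBoundEQthm2}, not equal to it.

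Second --- and this is the real source of the bound --- your assertion that the edge-addition constraint~\eqref{R-bound-j} is implied by~\eqref{R-bound-1} is backwards. The binding case is $j=k-1$: when you add a $(k-1)$-edge $A=\{x_1,\dots,x_{k-1}\}$ whose vertices sit in short-free components, linearity and the tree structure give $C(A)=\emptyset$, and each $x_i$ meets at most $\Delta-1$ further edges of $M$, all of size $k$. Passing from $M+A$ to $M/A$ therefore costs $k-1$ vertex deletions and at most $(k-1)(\Delta-1)$ many $(k-1)$-edge additions, yielding
\[
R^{k-1}\ \le\ s_{k-1}\,(1-s_1)^{k-1}\,(1-s_{k-1})^{(k-1)(\Delta-1)},
\]
i.e.\ $R\le t(1-t)(1-1/\Delta)^{\Delta-1}$, which is precisely~\eqref{eqGraphBoundEQthm2}. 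So the exact value of $R$ in the theorem comes from the edge-addition side of the induction, not the vertex-deletion side.
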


\begin{proof} As in the proof of \Cref{thm:graph_bound}, we will bound $|Z_G(\bl)|$ by measuring the effect that removing a single vertex can have on the independence polynomial (and iterating this procedure until we reach the empty graph). In this proof we will carefully control which vertices are removed and which subhypergraphs can be produced as a result. We will first give the precise statements \eqref{eq:genTree_vertex_induction} and \eqref{eq:genTree_edge_induction} that will be proved by induction (which bound the change in the independence polynomial after removing a vertex or adding an edge), before showing how the theorem follows, and finally establishing \eqref{eq:genTree_vertex_induction} and \eqref{eq:genTree_edge_induction}.\\

Let $M$ be any admissible subhypergraph of $G$ in which every connected component has at most one edge of size strictly less than $k$. Let $R>0$ and $s_1,\dots, s_{k-1}\in (0,1)$ be any constants satisfying 
	\begin{equation}\label{eq:RTreeBound1}
		R \leq s_1 (1-s_j)\left(1-s_{k-1}\right)^{\Delta}
	\end{equation}
	for all $2\leq j\leq k-2$, and
	\begin{equation}\label{eq:RTreeBound2}
		R^j\leq s_j(1-s_1)^{j}(1-s_{k-1})^{j(\Delta-1)}
	\end{equation}
for all $2\leq j\leq k-1$; and let $\bl$ satisfy $|\lam_v|\leq R$ for all $v\in V$. We will inductively prove the following two bounds; notice that both bounds are proved only under careful structural assumptions.

\begin{itemize}	
	\item First, let $v$ be any vertex of $M$ contained in an edge of size less than $k$, or let $v$ be any vertex in a connected component of $M$ where all edges have size $k$. Then we show that
	\begin{equation}\label{eq:genTree_vertex_induction}
		|Z_M(\bl)|\geq (1-s_1)\,|Z_{M-v}(\bl)|.
	\end{equation}
	
	\item Second, let $2\leq j \leq k-1$, and let $x_1,\dots, x_j$ be any vertices of $M$ that are (respectively) in components of $M$ where all edges have size $k$, and that are in some $k$-edge $\{v_1,\dots, v_{k-j},x_1,\dots, x_j\}$ of $G$ with $v_1,\dots, v_{k-j}\not\in V(M)$. Then we show that
	\begin{equation}\label{eq:genTree_edge_induction}
		|Z_{M}(\bl)|\geq (1-s_j)\,|Z_{M+\{x_1,\dots,x_j\}}(\bl)|.
	\end{equation}
\end{itemize}

	Before proving \eqref{eq:genTree_vertex_induction} and \eqref{eq:genTree_edge_induction}, we show how an iterative application of \eqref{eq:genTree_vertex_induction} completes the proof of \Cref{thm:linearTrees_bound}. Let $\bl$ be such that $|\lam_v|\leq R$ for all $v\in V$, where $R$ and $s_1,\dots, s_{k-1}$ are any constants satisfying \eqref{eq:RTreeBound1} and \eqref{eq:RTreeBound2} above. Taking the vertices of $G$ in any order $v_1,v_2,\dots, v_{|V|}$, we write a telescoping product:
	\begin{equation*}
    |Z_G(\bl)| = \prod_{i=1}^{|V(G)|} \left|\frac{Z_{G-\{v_1, \ldots, v_{i-1}\}}(\bl)}{Z_{G-\{v_1, \ldots, v_i\}}(\bl)}\right|.
    \end{equation*}
    We may then apply \eqref{eq:genTree_vertex_induction} with $M = Z_{G-\{v_1, \ldots, v_{i-1}\}}$ and $v=v_i$; notice that these are indeed valid choices respecting our structural constraints, since all edges in $Z_{G-\{v_1, \ldots, v_{i-1}\}}$ are of size $k$. Thus, we get the bound
    \begin{equation*}
    |Z_M(\bl)| \geq (1-s_1)^{|V|}.
    \end{equation*}
    
    To establish \Cref{thm:linearTrees_bound}, it remains only to optimize the choice of $R$. It would perhaps be difficult to precisely optimize the choices of $s_1,\dots, s_{k-1}$ to make the bounds \eqref{eq:RTreeBound1} and \eqref{eq:RTreeBound2} on $R$ as large as possible. But if we take $s_1=\cdots=s_{k-2} = \Delta^{-1/(k-1)}$ and $s_{k-1} = 1/\Delta$, we will be able to choose $R= \left(\frac{\Delta-1}{\Delta}\right)^{\Delta-1} \left( 1- \Delta^{ - \frac{1}{k-1}}   \right)\Delta^{ - \frac{1}{k-1}}$ and ensure that \eqref{eq:RTreeBound1} and \eqref{eq:RTreeBound2} are satisfied. Indeed, the constraints are now
    $$R\leq \Delta^{ - \frac{1}{k-1}}\cdot\left(\frac{\Delta-1}{\Delta}\right)^\Delta$$
    and
    $$R\leq s_j^{1/j}\cdot\frac{\Delta^{1/(k-1)}-1}{\Delta^{1/(k-1)}}\cdot\left(\frac{\Delta-1}{\Delta}\right)^{\Delta-1}$$
    for all $2\leq j\leq k$. Note that for $1<j< k$ we have $s_j^{1/j}<\Delta^{- \frac{1}{k-1}}$ and thus the strongest of these constraints is for $j=k-1$, i.e.,
    \begin{align*}
        R&\leq \Delta^{ - \frac{1}{k-1}}\cdot\frac{ \Delta^{  \frac{1}{k-1}}-1}{ \Delta^{  \frac{1}{k-1}}}\cdot\left(\frac{\Delta-1}{\Delta}\right)^{\Delta-1}  \\
        &= \left(\frac{\Delta-1}{\Delta}\right)^{\Delta-1} \left( 1- \Delta^{ - \frac{1}{k-1}}   \right)\Delta^{ - \frac{1}{k-1}}.
    \end{align*}
    It is now easy to see that the above bound is lower than the one in \eqref{eq:RTreeBound1}, so we may chose $R$ to be equal to this value. Thus if $|\lam_v|\leq R = \left(\frac{\Delta-1}{\Delta}\right)^{\Delta-1} \left( 1- \Delta^{ - \frac{1}{k-1}}   \right)\Delta^{ - \frac{1}{k-1}}$ for all $v\in V$, then 
    \begin{equation*}
    |Z_M(\bl)| \geq (1-s_1)^{|V|} = \left(1-\Delta^{- \frac{1}{k-1}}\right)^{|V|},
    \end{equation*}
    finishing the proof of \Cref{thm:linearTrees_bound}.  Note also that the factor $\left(\frac{\Delta-1}{\Delta}\right)^{\Delta-1} \left( 1- \Delta^{ - \frac{1}{k-1}}   \right)$ is increasing in $\Delta$ and $\left(\frac{\Delta-1}{\Delta}\right)^{\Delta-1} \left( 1- \Delta^{ - \frac{1}{k-1}}   \right)\cdot k$ is decreasing in $k$,  so to obtain the conclusion it suffices that $|\lam_v| \le \frac{\log 2}{2k} \Delta^{ - \frac{1}{k-1}}$, since $\lim_{k \to \infty} k(1-2^{-1/(k-1)}) = \log 2 $. \\
    
    We now return to the vertex and edge deletion bounds \eqref{eq:genTree_vertex_induction} and \eqref{eq:genTree_edge_induction}, which we will prove by induction. The base case is $M=\varnothing$, where both \eqref{eq:genTree_vertex_induction} and \eqref{eq:genTree_edge_induction} are vacuously true. 
	
	For the induction step, assume that \eqref{eq:genTree_vertex_induction} and \eqref{eq:genTree_edge_induction} hold for all $M$ with $|V(M)|\leq n$, and consider any admissible subhypergraph $\Lambda$ of $G$ with $n+1$ vertices, where $\Lambda$ satisfies the assumption that every connected component has at most one edge of size strictly less than $k$. 
	
	We first establish \eqref{eq:genTree_vertex_induction} for $\Lambda$. To that end, let $v$ be any vertex of $\Lambda$ satisfying the hypotheses of \eqref{eq:genTree_vertex_induction}. To bound $|Z_\Lambda(\bl)|$, we begin by applying identity \eqref{eq:vertex_delete}:
	\[
	Z_\Lambda(\bl)=Z_{\Lambda-v}(\bl)+\lam_vZ_{\Lambda/v}(\bl).
	\]
	And using the reverse triangle inequality and noting that $|\lam_v|\leq R$ by assumption, this gives
	\begin{equation}\label{eq:tree_triangle1}
		|Z_\Lambda(\bl)|\geq |Z_{\Lambda-v}(\bl)|-R|Z_{\Lambda/v}(\bl)|.
	\end{equation}
	To extract the desired lower bound on $|Z_\Lambda(\bl)|$, we will use our induction hypotheses to bound $|Z_{\Lambda/v}(\bl)|$ in terms of $|Z_{\Lambda-v}(\bl)|$. Observe that $\Lambda/v$ may be obtained from $\Lambda-v$ by a sequence of at most $\Delta$ many vertex deletions and $j$-edge additions (for $2\leq j\leq k-1$). But we must take care to verify that at each step, all the necessary conditions are satisfied to apply the induction hypotheses.
	
	\begin{figure}[H]
		\centering
		\includegraphics[width=1\linewidth]{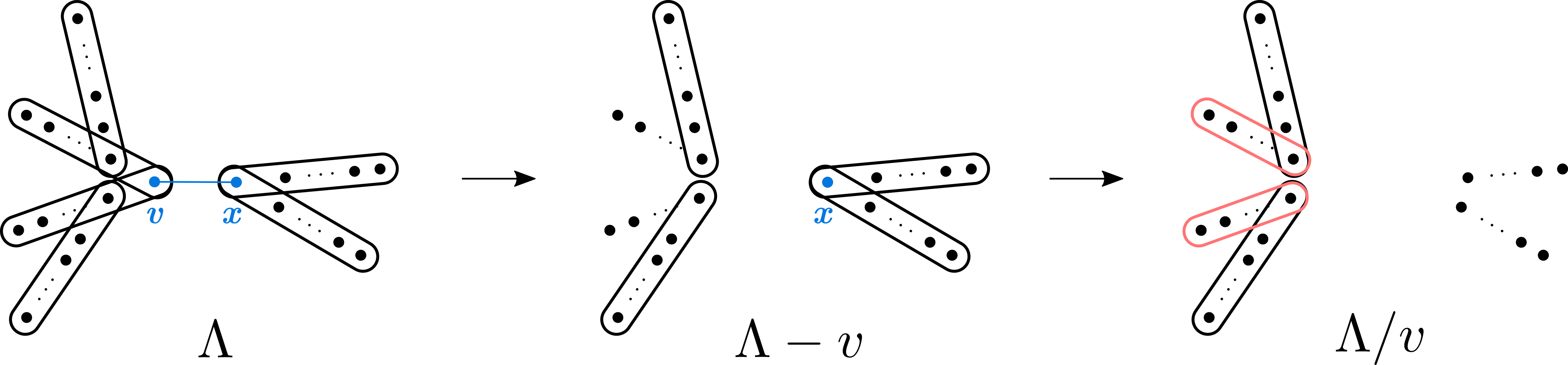}
		\caption{Obtaining $\Lambda/v$ from $\Lambda$ if $v$ is in a 2-edge}
		\label{fig:treeVertexInduct}
	\end{figure}	
	
	\begin{figure}[H]
		\centering
		\includegraphics[width=1\linewidth]{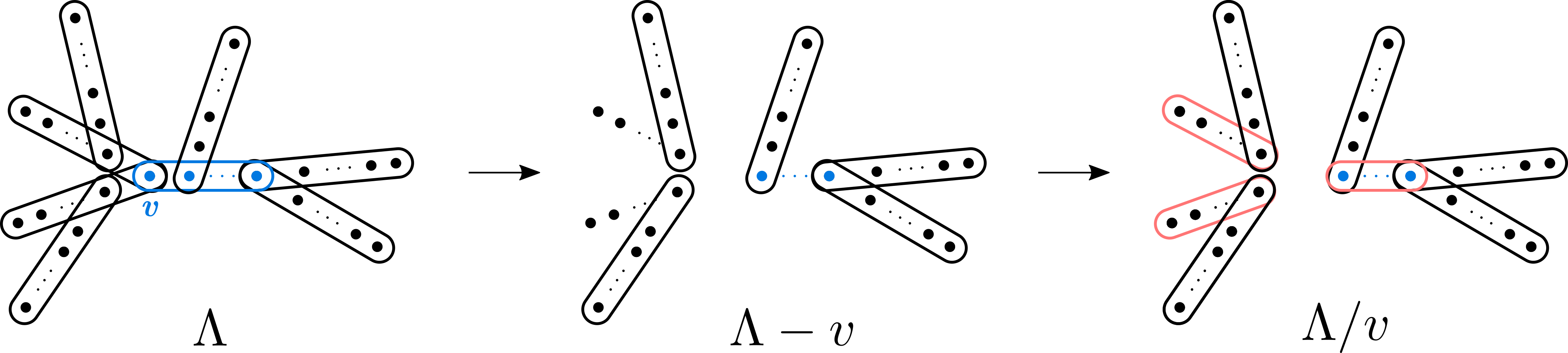}
			\caption{Obtaining $\Lambda/v$ from $\Lambda$ if $v$ is \textbf{not} in a 2-edge}
		\label{fig:treeVertexInductBig}
	\end{figure}

	First, if $v$ is contained in a 2-edge $\{v,x\}$ of $\Lambda$, then starting from $\Lambda-v$, we delete $x$. (Notice that by assumption, $v$ is contained in at most one such edge.) And we can indeed apply hypothesis \eqref{eq:genTree_vertex_induction} (with $M=\Lambda-v$), as $\Lambda-v$ has at most one edge of size less than $k$ in each connected component, and $x$ is in a component of $\Lambda-v$ where all edges have size $k$ (the edge $\{v,x\}$ is excluded from $\Lambda-v$). So applying \eqref{eq:genTree_vertex_induction}, this gives

	\begin{equation}\label{eq:tree_induct1}
		|Z_{\Lambda-v}|\geq (1-s_1)\cdot |Z_{\Lambda-v-x}|
	\end{equation}
	if $v$ has a graph neighbor $x$.

	Then, regardless of whether $v$ is contained in a 2-edge, to obtain $\Lambda/v$ from $\Lambda-v-N_2(v)$, we add a $j$-edge $\{x_1,\dots,x_j\}$ for each $\{v,x_1,\dots,x_j\}\in E(\Lambda)$, for all $2\leq j\leq k-1$.

    The number of $j$-edge additions performed at this step is at most the number of $(j+1)$-edges in $\Lambda$ adjacent to $v$, which we will denote $n_{1j}$. Note that most of the numbers $n_{1j}$ will be zero, as $v$ is adjacent to at most one edge of size less than $k$.

    And we may indeed apply induction hypothesis \eqref{eq:genTree_edge_induction} for these edge additions: regardless of the order in which we add these edges, at each step, the corresponding $x_1,\dots,x_j$ are in different components from any edges of size less than $k$ added at previous steps, since the vertex $v$ is no longer present. And at each step, the hypergraph produced has at most one edge of size less than $k$ in each connected component. So we may repeatedly apply our second induction hypothesis \eqref{eq:genTree_edge_induction} to obtain
    
    \begin{equation}\label{eq:tree_induct2}
        |Z_{\Lambda-v-N_2(v)}|\geq \prod_{i\geq 2}(1-s_i)^{n_{1i}}\cdot |Z_{\Lambda/v}|.
		\end{equation}
	Then combining \eqref{eq:tree_induct1} and \eqref{eq:tree_induct2}, we obtain the following relationship between $|Z_{\Lambda/v}(\bl)|$ and $|Z_{\Lambda-v}(\bl)|$:
		\begin{equation*} 
			|Z_{\Lambda-v}|\geq \prod_{i\geq 1}(1-s_i)^{n_{1i}}\cdot |Z_{\Lambda/v}|
		\end{equation*}
	(where $n_{1i}$ is the number of graph neighbors of $v$, which is either 0 or 1). And notice that, since $v$ is contained in at most one edge of size less than $k$, and at most $\Delta$ edges total, this bound may be simplified substantially:
		\begin{equation} \label{eq:tree_induct3}
			|Z_{\Lambda-v}|\geq  (1-s_j)\cdot (1-s_{k-1})^\Delta\cdot |Z_{\Lambda/v}|
		\end{equation}
	for some $1\leq j\leq k-2$. (Note: we may very slightly improve this bound by considering whether or not $v$ is contained in an edge of size less than $k$, but this will not substantially change our final answer. Also, we cannot control which $j$ is used; we must simply take the worst case.)

	And combining this with \eqref{eq:tree_triangle1}, we see that
	\begin{align*}
		|Z_\Lambda(\bl)|
		&\geq |Z_{\Lambda-v}(\bl)|\left(1-R\left(\frac{1}{1-s_j}\right)\left(\frac{1}{1-s_{k-1}}\right)^{\Delta}\right).
	\end{align*}
	for all $1\leq j\leq k-2$. Thus the induction hypothesis \eqref{eq:genTree_vertex_induction} is guaranteed to be satisfied for $\Lambda$ as long as 
	\begin{equation}\label{eq:tree_inductioneq1}
	1-R\left(\frac{1}{1-s_j}\right)\left(\frac{1}{1-s_{k-1}}\right)^{\Delta}\ \geq\ 1-s_1
	\end{equation}
	for each $1\leq j\leq k-2$; this is equivalent to condition \eqref{eq:RTreeBound1} above.
	
	We now proceed with the induction step for \eqref{eq:genTree_edge_induction}, which deals with edge addition. Again, we let $\Lambda$ be any admissible subhypergraph of $G$ with $n+1$ vertices satisfying the assumption that every connected component has at most one edge of size strictly less than $k$. Let $2\leq j\leq k-1$, and consider any $x_1,\dots, x_j$ in $\Lambda $ satisfying the hypotheses of \eqref{eq:genTree_edge_induction} -- that is, that $x_1,\dots, x_j$ are (respectively) in components of $\Lambda$ where all edges have size $k$, and they are in some $k$-edge $\{v_1,\dots, v_{k-j},x_1,\dots, x_j\}$ of $G$ with $v_1,\dots, v_{k-j}\not\in V(\Lambda)$. Notice that this implies $\Lambda + \{x_1,\dots,x_j\}$ is also an admissible subhypergraph of $G$, and that each component has at most one edge of size less than $k$ (the setting of our induction hypotheses).
	
	To bound $|Z_{\Lambda+\{x_1,\dots,x_j\}}|$, we begin by applying identity \eqref{eq:edge_add}:
	\[
	Z_\Lambda(\bl)=Z_{\Lambda+\{x_1,\dots,x_j\}}(\bl)+\lam_{x_1}\dots \lam_{x_j}Z_{\Lambda/\{x_1,\dots,x_j\}}(\bl).
	\]
	Then as above, by using the reverse triangle inequality and noting that $|\lam_{x_1}|,\dots, |\lam_{x_j}|\leq R$, we obtain
	\begin{equation}\label{eq:tree_triangle2}
		|Z_\Lambda(\bl)|\geq |Z_{\Lambda+\{x_1,\dots,x_j\}}(\bl)|-R^j|Z_{\Lambda/\{x_1,\dots,x_j\}}(\bl)|.
	\end{equation}
	To obtain a lower bound on $|Z_\Lambda(\bl)|$, we will apply our induction hypotheses to bound $|Z_{\Lambda/\{x_1,\dots,x_j\}}(\bl)|$ in terms of $|Z_{\Lambda+\{x_1,\dots,x_j\}}(\bl)|$. Notice that $\Lambda/\{x_1,\dots,x_j\}$ may be obtained from $\Lambda+\{x_1,\dots,x_j\}$ by a sequence of at most $j\Delta$ many operations as follows: first, we delete $x_1, x_2,\dots,$ and $x_j$. Note that, unlike in the general case, we do not need to delete $C(x_1,\dots, x_j)$; by our assumptions, this set is empty. Then add each $(k-1)$-edge $\{y_1,\dots, y_{k-1}\}$ where $\{x_i,y_1,\dots, y_{k-1}\}$ is in $E(\Lambda)$ for some $x_i$. 
	
	\begin{figure}[H]
		\centering
		\includegraphics[width=1\linewidth]{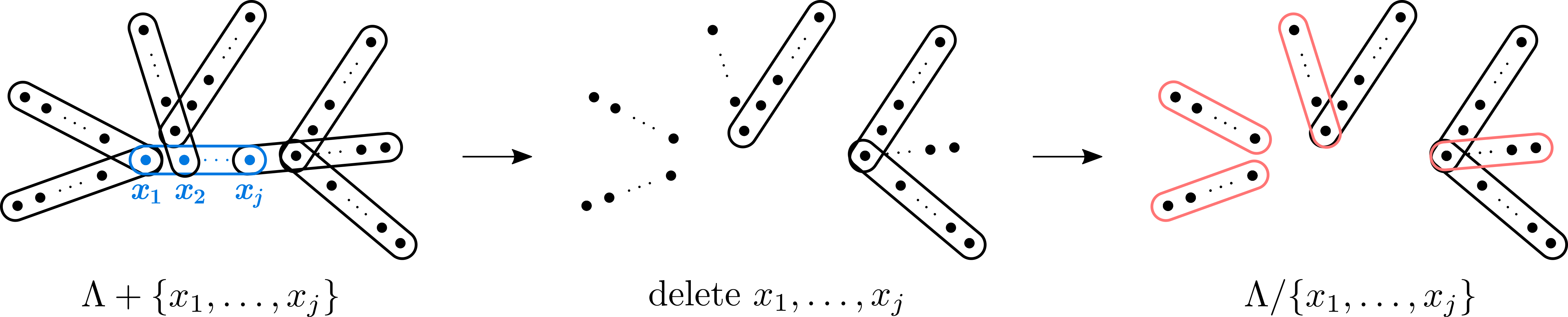}
		\caption{Obtaining $\Lambda/\{x_1,\dots,x_j\}$ from $\Lambda+\{x_1,\dots,x_j\}$}
		\label{fig:treeEdgeInduct}
	\end{figure}
	
	In total, starting from $\Lambda+\{x_1,\dots,x_j\}$, we perform the vertex deletion operation $j$ times, and the $(k-1)$-edge addition operation at most $j(\Delta-1)$ many times -- once for each of the $\leq \Delta-1$ edges adjacent to $x_1,\dots, x_j$ in $\Lambda + \{x_1,\dots,x_j\}$ respectively (excluding the edge $\{x_1,\dots,x_j\}$). And again, we must take care to verify that at each step, all the necessary conditions to apply \eqref{eq:genTree_vertex_induction} and \eqref{eq:genTree_edge_induction} are satisfied.

	First, notice that our initial application of \eqref{eq:genTree_vertex_induction} in this sequence is to delete a vertex from $\Lambda + \{x_1,\dots,x_j\}$, which has $n+1$ vertices. We are allowed to do so, since we just established the $n+1$ case of \eqref{eq:genTree_vertex_induction} above, and, without loss of generality, we begin by deleting the vertex $x_1$, which is contained in an edge of size $j<k$ in $\Lambda + \{x_1,\dots,x_j\}$. We may also delete $x_2,\dots, x_j$ next, as vertices in components where all edges have size $k$. Furthermore, at each step, regardless of the order in which we add the $(k-1)$-edges $\{y_1,\dots, y_{k-1}\}$, the corresponding $y_1,\dots, y_{k-1}$ are in different components from any $(k-1)$-edges added at previous steps, since $x_1,\dots,x_j$ are no longer present. Finally, at each step, the hypergraph produced has at most one edge of size less than $k$ in each connected component.
	
	So by \eqref{eq:genTree_vertex_induction} and \eqref{eq:genTree_edge_induction}, 
	\begin{equation*}
		|Z_{\Lambda + \{x_1,\dots,x_j\}}|\geq (1-s_1)^{j}(1-s_{k-1})^{j(\Delta-1)}\cdot |Z_{\Lambda / \{x_1,\dots,x_j\}}|.
	\end{equation*}
	
	Now, combining this inequality with \eqref{eq:tree_triangle2}, we see that 
	\begin{align*}
		|Z_\Lambda(\bl)|
		&\geq |Z_{\Lambda+\{x_1,\dots,x_j\}}|\left(1-R^ j\left(\frac{1}{1-s_1}\right)^{j}\left(\frac{1}{1-s_{k-1}}\right)^{j(\Delta-1)}\right).
	\end{align*}
	
	So we will obtain 
	\eqref{eq:genTree_edge_induction} 
	for $\Lambda$ as long as 
	\begin{equation}\label{eq:tree_inductioneq2}
		1-R^ j\left(\frac{1}{1-s_1}\right)^{j}\left(\frac{1}{1-s_{k-1}}\right)^{j(\Delta-1)}\ \geq\ 1-s_j
	\end{equation}
	for each $2\leq j\leq k-1$; this is equivalent to condition \eqref{eq:RTreeBound2} above. 
	
	Therefore, if conditions \eqref{eq:RTreeBound2} and \eqref{eq:RTreeBound2} are satisfied, this completes the induction, giving the edge and vertex deletion bounds \eqref{eq:genTree_vertex_induction} and \eqref{eq:genTree_edge_induction}, as desired.
\end{proof}

\section{Constructions}
\label{sec:Constructions}

In this section we provide the constructions that prove Propositions~\ref{propExample} and~\ref{propExample2}.

\subsection{A construction (due to Wojciech Samotij)} \label{sec-WS}
\label{secWSconstruction}

Here we describe, for each odd $k \geq 3$ and each $N$, an $k$-uniform hypergraph $H_{k,N}$ with maximum degree $\Delta \geq N$ with the following property: the univariate independence polynomial $Z_{H_{k,N}}(\lam)$ of $H_{k,N}$ is negative at $x=-k(\log \Delta)/\Delta$.  In particular, since $Z_{H_{k,N}}(\lam)\ge 1$ for $\lam \ge 0$, there must be some $\lam <0$, $|\lam|  < k(\log \Delta)/\Delta$ so that $Z_{H_{k,N}}(\lam) =0$.   This will prove Proposition~\ref{propExample}.

The vertex set of $H_{k,N}$ consists of a set $\{x_1, \ldots, x_k\}$, together with, for each $i=1, \ldots, k$, a set $\{y^i_1, \ldots, y^i_s\}$, where $s$ is chosen so that $(k-1)s \geq N$ ($s$ will later have to satisfy a further condition). For each $i=1, \ldots, k$ and each $j=1, \ldots, s$ there is an edge $(\{x_1, \ldots, x_k\}\setminus \{x_i\}) \cup y^i_j$.

In other words, we start with a set $R$ of $k$ vertices, and to each $(k-1)$-subset $R'$ of $R$ we associate a cloud $C(R')$ of $s$ vertices. An edge is formed by taking an $R'$ together with one element from its cloud $C(R')$.

Each $x_i$ is in $k-1$ $(k-1)$-subsets of $\{x_1, \ldots, x_k\}$, and each such subset can be extended to an edge of $H_{k,N}$ in $s$ ways, so the degree of each $x_i$ is $(k-1)s$. Each $y^i_j$ has degree 1. So the maximum degree of $H_{k,N}$ is $\Delta:=(k-1)s>N$.

For each $0 \leq \ell < k-1$, the total contribution to $Z_{H_{k,N}}(\lam)$ from independent sets that use exactly $\ell$ vertices from $\{x_1, \ldots, x_k\}$ is
$$
\binom{k}{\ell}\lam^\ell (1+\lam)^{sk} \,.
$$  
Since $\ell < k-1$, an arbitrary subset of the $y^i_j$ can be added to any $\ell$-subset of $\{x_1, \ldots, x_\ell\}$ without saturating an edge of  $H_{k,N}$. The total contribution to $Z_{H_{k,N}}(\lam)$ from independent sets that use exactly $k-1$ vertices from $\{x_1, \ldots, x_k\}$ is
$$
k\lam^{k-1} (1+\lam)^{s(k-1)} \,.
$$  
If $x_i$ is the one vertex from $\{x_1, \ldots, x_\ell\}$ not selected, then no $x^i_j$ can be added without saturating an edge, but any subset of the $x^{i'}_j$ for $i' \neq i$ can be. Finally, the contribution to $Z_{H_{k,N}}(\lam)$ from independent sets that use all of  $\{x_1, \ldots, x_k\}$ is simply $\lam^k$ (no extra vertices can be added without saturating an edge).

We now specialize to $\lam=-k(\log \Delta)/\Delta$. Recalling $\Delta=(k-1)s$, and using that $1+t \leq e^t$ for all real $t$, we get that for $\ell < k-1$,
$$
\left|\binom{k}{\ell}\lam^\ell (1+\lam)^{sk}\right| \leq c(k)\frac{(\log (k-1)s)^\ell}{s^{\ell+\frac{k^2}{k-1}}} 
$$
and
$$
\left|k \lam^{k-1} (1+\lam)^{s(k-1)}\right| \leq c(k)\frac{(\log (k-1)s)^{k-1}}{s^{2k-1}}\,,
$$
where $c(k)$ is a constant depending only on $k$.

On the other hand we have
$$
\lam^k = -k^k\frac{(\log (k-1)s)^k}{s^k}
$$
(note that we use here that $k$ is odd). Because $k < 2k-1$ and $k < \ell+k^2/(k-1)$ for all $\ell < k-1$, we have that for all sufficiently large $s=s(k)$,
$$
k^k\frac{(\log (k-1)s)^k}{s^k} > c(k)\left(\frac{(\log (k-1)s)^{k-1}}{s^{2k-1}} + \sum_{\ell=0}^{k-2} \frac{(\log (k-1)s)^\ell}{s^{\ell+\frac{k^2}{k-1}}}\right)
$$
and so $Z_{H_{k,N}}(\lam)<0$ at $x=-k(\log \Delta)/\Delta$, as claimed.

\subsection{A hypertree construction}
\label{secTreeConstruction}

We now give a construction to prove Proposition~\ref{propExample2} and show that the bound in Theorem~\ref{thm:linearTrees_bound} cannot be improved beyond a polylogarithmic factor in $\Delta$.\\
Consider the $k$-uniform star  of size $\Delta$, $S^k_\Delta$, which consists of $\Delta$ edges (each of size $k$) that share a single vertex,  so $S^k_\Delta$ has $1+(k-1)\Delta$ vertices in total. 
The independence polynomial of $S^k_\Delta$ is
$$Z_{S^k_\Delta}(\lambda)=(1+\lambda)^{(k-1)\Delta}+\lambda((1+\lambda)^{k-1}-\lambda^{k-1})^\Delta.$$
Let $k$ be even. We will prove that $Z_{S^k_\Delta}\left(-C\left(\frac{\log \Delta}{\Delta}\right)^{1/(k-1)}\right)<0$ for a constant $C$ and $\Delta$ large enough, and thus $Z_S$ will have a root of magnitude at most $C\left(\frac{\log \Delta}{\Delta}\right)^{1/(k-1)}$.\\
Note that the $\lambda$ we choose will clearly satisfy $|\lambda|<1$ so it is equivalent to show
$$1+\lambda\left(1-\left(\frac{\lambda}{1+\lambda}\right)^{k-1}\right)^{\Delta}<0.$$
Let $\frac{\lambda}{1+\lambda}=-f(\Delta)$, so that $\lambda= -\frac{f(\Delta)}{1+f(\Delta)}.$ When $k$ is even, we can rewrite the expression as
$$1+\lambda\left(1+f(\Delta)^{k-1}\right)^{\Delta}.$$
Set $f(\Delta):=(\log \Delta/\Delta)^{1/(k-1)}$, and then the asymptotic behavior of the above expression is
$$1+\lambda(1+\log \Delta/\Delta)^\Delta\sim1-\frac{(\log \Delta/\Delta)^{1/(k-1)}}{1+(\log \Delta/\Delta)^{1/(k-1)}}\cdot \Delta\rightarrow-\infty.$$
Note that for this $f(\Delta)$, we have $|\lambda|=\Theta\left(|f(\Delta)|\right)=\Theta\left(\left(\frac{\log \Delta}{\Delta}\right)^{1/(k-1)}\right).$

\section{Algorithms}
\label{secAlgorithms}

Given the zero-freeness result of Theorem~\ref{thmZeroFreeDisk}, we can obtain an FPTAS for $Z_G(\lam)$ and prove Theorem~\ref{thmAlgorithm} following Barvinok's method of polynomial interpolation: truncating the Taylor series for $\log Z_G(\lam)$ (in fact, the cluster expansion) around $0$ after a given number of terms.  This approach has been used in several recent works on approximate counting, including~\cite{patel2016deterministic,peters2019conjecture,HelmuthAlgorithmic2,bencs2018note,bezakova2019inapproximability,de2021zeros} on approximating the independence polynomial of bounded-degree graphs for (possibly complex) values of $\lam$.

Restating Theorem~\ref{thmAlgorithm}, our goal is to prove the following.

\begin{theorem}
    \label{thmAlgorithm2}
  For the class of hypergraphs $G$ of maximum degree $\Delta$ and maximum edge size $k$, and for complex $\lam$ satisfying $| \lam | < \lam_s(\Delta+1)$,  there is an algorithm running in time $(n/\eps)^{O_{k,\Delta}(1)}$ that computes an $\eps$-relative approximation to $Z_G(\lambda)$.
\end{theorem}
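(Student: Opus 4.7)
The plan is to follow Barvinok's polynomial interpolation method, combining the zero-free disk of Theorem~\ref{thmZeroFreeDisk} with the hypergraph cluster-expansion algorithm of Liu, Sinclair and Srivastava~\cite{liu2019ising}.

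First I would fix $\lam$ with $|\lam| < \lam_s(\Delta+1)$ and set $\rho := |\lam|/\lam_s(\Delta+1) \in (0,1)$. Consider the univariate polynomial $p(z) := Z_G(z\lam)$; it has degree at most $n := |V(G)|$, satisfies $p(0) = 1$, and by Theorem~\ref{thmZeroFreeDisk} does not vanish on the closed disk $\{z \in \mathbb C : |z| \le 1/\rho\}$. Barvinok's approximation lemma~\cite[Lemma~2.2.1]{barvinok2016combinatorics} then shows that if $T_m$ denotes the degree-$m$ Taylor polynomial of $\log p$ at $0$, then $|T_m(1) - \log p(1)| \le C_\rho \, n \, \rho^{m}$ for some constant $C_\rho > 0$ depending only on $\rho$. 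Taking $m := \lceil C'_\rho \log(n/\eps) \rceil$ makes this error at most $\eps/2$, and exponentiating converts the additive bound on $\log p$ into an $\eps$-relative approximation of $Z_G(\lam) = p(1)$.

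Second, I would compute $T_m(1)$ in time $(n/\eps)^{O_{k,\Delta}(1)}$ by evaluating the first $m$ coefficients of the Taylor series of $\log Z_G$ at $0$. By the standard cluster-expansion identity for hypergraph independence polynomials, the coefficient of $\lam^r$ can be written as a weighted sum over connected subhypergraphs of $G$ whose vertex count is at most a linear function of $r$, with weights given by Ursell functions. Patel and Regts~\cite{patel2016deterministic} showed such a coefficient can be computed in time $n \cdot \exp(O(r))$ in the graph case by enumerating rooted connected subgraphs up to isomorphism and reading off the Ursell weights; Liu, Sinclair and Srivastava~\cite{liu2019ising} extended this enumeration to hypergraphs, giving cost $n \cdot \exp(O_{k,\Delta}(r))$ per coefficient. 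Summing over $r \le m = O(\log(n/\eps))$ yields total running time $n \cdot \exp(O_{k,\Delta}(\log(n/\eps))) = (n/\eps)^{O_{k,\Delta}(1)}$.

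The main obstacle is the efficient coefficient computation in the second step: although Theorem~\ref{thmZeroFreeDisk} hands us the zero-free disk and Barvinok's lemma then yields the analytic truncation bound automatically, one must still verify that the Ursell-weighted sums over bounded-size connected subhypergraphs can be computed in singly exponential time, which is not a priori obvious when edges may have size up to $k$. This is exactly the content of~\cite{liu2019ising}; once their algorithm is invoked as a black box, the theorem follows by assembling Theorem~\ref{thmZeroFreeDisk}, Barvinok's approximation lemma, and this coefficient-computation routine, with no further ingredients required.
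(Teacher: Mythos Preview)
Your proposal is correct and follows essentially the same route as the paper: invoke Theorem~\ref{thmZeroFreeDisk} to get the zero-free disk, apply Barvinok's approximation lemma to bound the truncation error of the Taylor series for $\log Z_G$ at order $m = O(\log(n/\eps))$, and then cite Liu--Sinclair--Srivastava~\cite{liu2019ising} as a black box to compute the first $m$ coefficients in time $(n/\eps)^{O_{k,\Delta}(1)}$. The only cosmetic difference is that you reparametrize via $p(z)=Z_G(z\lam)$ and give slightly more detail on the cluster-expansion interpretation of the coefficients, whereas the paper applies Barvinok's lemma directly to $Z_G$ and states the Liu--Sinclair--Srivastava result as a lemma without unpacking it.
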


Given a polynomial $Z(\lam)$ with $Z(0) =1$, let $T_r(\lam)$ be the order-$r$ truncation of the Taylor series for $\log Z(\lam)$ around $0$.  That is,
\[  T_r(\lam)  = \sum_{j=1}^r  \frac{\lam^j}{j!} \frac{\partial^j \log Z_G(\lam) }{\partial \lam^j}  \,.\]

The connection between zero-freeness and approximation is provided by the following elementary but powerful lemma of Barvinok. 

\begin{lemma}[Barvinok~\cite{barvinok2016combinatorics}]
Let $Z(\lam)$ be a polynomial of degree at most $N$, and suppose that $Z(\lam) \ne 0$ when $|\lam| \le B$.  Then for $|\lam|< B$, 
\begin{equation}
    \left|T_r(\lam) - \log Z(\lam)  \right |  \le \frac{N ( |\lam|/B)^{r+1}}{ (r+1) (1- |\lam|/B)  } \,.
\end{equation}
\end{lemma}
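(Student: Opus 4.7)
The plan is to exploit the factorization of $Z$ over its roots. Since $\deg Z \le N$, $Z(0)=1$, and $Z$ has no zero on the closed disk $|\lam| \le B$, we may write
\[
Z(\lam) = \prod_{i=1}^{N'}\left(1 - \frac{\lam}{\alpha_i}\right)
\]
with $N' \le N$ and every root satisfying $|\alpha_i| > B$. This factorization is the main structural input; everything else is bookkeeping with geometric series.

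First, I would take the principal branch of $\log$ on each factor (well-defined on $|\lam| < B$ because no factor vanishes there) to obtain
\[
\log Z(\lam) \;=\; \sum_{i=1}^{N'}\log\!\left(1 - \frac{\lam}{\alpha_i}\right) \;=\; -\sum_{j=1}^{\infty}\frac{\lam^j}{j}\,p_j, \qquad p_j := \sum_{i=1}^{N'}\frac{1}{\alpha_i^{j}},
\]
where the double series converges absolutely on $|\lam| < B$ because $|\lam/\alpha_i| < 1$ for every $i$. Identifying coefficients of this series with the Taylor coefficients of $\log Z$ at $0$ gives $T_r(\lam) = -\sum_{j=1}^{r}(\lam^j/j)\,p_j$, so the remainder is precisely the tail $-\sum_{j \ge r+1}(\lam^j/j)\,p_j$.

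Second, I would bound this tail term by term using the triangle inequality. Since $|p_j| \le N'/B^{j} \le N/B^{j}$ and $1/j \le 1/(r+1)$ for $j \ge r+1$, the tail is dominated by
\[
\frac{N}{r+1}\sum_{j \ge r+1}\left(\frac{|\lam|}{B}\right)^{j} \;=\; \frac{N\,(|\lam|/B)^{r+1}}{(r+1)(1-|\lam|/B)},
\]
which is the claimed inequality. There is no genuine obstacle here: the only mildly delicate points are ensuring the logarithm is single-valued (guaranteed by zero-freeness on the closed disk) and interchanging sum and logarithm against the Taylor expansion of $\log Z$ at the origin, both of which are justified by the absolute convergence on $|\lam|<B$.
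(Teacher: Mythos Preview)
Your proof is correct and is essentially the standard argument (factor over the roots, expand $\log(1-\lam/\alpha_i)$ as a power series, and bound the tail geometrically). Note that the paper does not actually prove this lemma; it simply quotes it from Barvinok~\cite{barvinok2016combinatorics} (Lemma~2.2.1 there), and your argument is precisely the one Barvinok gives.
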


We now prove Theorem~\ref{thmAlgorithm2}.

\begin{proof}
Since $Z_G(\lam)$ is a polynomial of degree at most $n = |V(G)|$, if $Z_G(\lam)$ is zero-free in a disk of radius $B$ around $0$ and  $|\lam| \le (1-\delta)B$ then $\exp(T_r(\lam))$ gives an $\eps$-relative approximation to $Z_G(\lam)$ when $r \ge C \log (n/\eps)$, where $C$ is a constant that depends only on $\delta$.  

Thus to prove Theorem~\ref{thmAlgorithm2} given Theorem~\ref{thmZeroFreeDisk} we are left with the task of computing $T_r(\lam)$ for $r = \Theta( \log (n/\eps))$ in time polynomial in $n/\eps$. Generalizing the approach of Patel and Regts~
\cite{patel2016deterministic} to hypergraphs, Liu, Sinclair, and Srivastava~\cite{liu2019ising} gave an algorithm to compute the first $r$ coefficients of the partition function of a $2$-spin model on a bounded-degree hypergraph.  Since the coefficients of the Taylor series for $\log Z$ are related to the coefficients of $Z$  through a triangular system of linear equations, this yields an algorithm to compute $T_r(\lam)$. 
Specializing their result to the hypergraph independence polynomial yields the following, which finishes the proof of Theorem~\ref{thmAlgorithm2}.

\begin{lemma}[Liu, Sinclair, Srivastava~\cite{liu2019ising}]
    Fix $k$, $\Delta$, and $C>0$. Then there is an algorithm running in time polynomial in $n/\eps$ that computes $T_r(\lam)$ for any hypergraph $G$ of maximum degree $\Delta$ and maximum edge size $k$ on $n$ vertices, where $r = \lceil C \log ( n/\eps ) \rceil$.
\end{lemma}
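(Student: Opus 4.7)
The plan is to reduce computing $T_r(\lam)$ to exactly computing the first $r$ coefficients of $Z_G(\lam)$, and then to compute those coefficients efficiently. For the first step, write $Z_G(\lam) = 1 + \sum_{j\ge 1} a_j \lam^j$ and $\log Z_G(\lam) = \sum_{j \ge 1} b_j \lam^j$; differentiating the identity $Z_G(\lam) = \exp(\log Z_G(\lam))$ gives a Newton-type triangular recurrence expressing $b_1, \ldots, b_r$ in terms of $a_1, \ldots, a_r$ (and vice versa). Hence $T_r(\lam) = \sum_{j=1}^r b_j \lam^j$ is determined by, and computable in time $\mathrm{poly}(r)$ from, $a_0, a_1, \ldots, a_r$. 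Since $a_j = i_j(G)$ is the number of independent sets of size $j$ in $G$, and $r = O(\log(n/\eps))$, the task reduces to computing $i_0(G), i_1(G), \ldots, i_r(G)$ in time polynomial in $n/\eps$.

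To compute the $i_j(G)$ I would follow the Patel--Regts approach~\cite{patel2016deterministic}, extended to hypergraphs in~\cite{liu2019ising}. The key identity expresses $i_j(G)$ as a linear combination, over isomorphism types $H$ of connected hypergraphs with at most $j$ vertices, of the induced sub-hypergraph counts $\mathrm{ind}(H,G)$, where an induced sub-hypergraph on $S \subseteq V(G)$ consists of exactly those hyperedges of $G$ fully contained in $S$. The coefficients of this expansion depend only on $H$ and $j$, and are computable in time depending only on $k$, $\Delta$, and $r$ by brute-force inclusion--exclusion on each small $H$. So it suffices to compute $\mathrm{ind}(H,G)$ for every connected hypergraph $H$ on at most $r$ vertices.

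To enumerate all connected induced sub-hypergraphs of $G$ of size at most $r$, I would use the combinatorial bound that in a hypergraph of maximum degree $\Delta$ and edge size at most $k$, the number of connected vertex sets of size at most $r$ containing a fixed vertex is at most $(e k \Delta)^{O(r)}$; this follows from a standard ``tree of extensions'' argument where each step appends at most $k-1$ new vertices via one of at most $\Delta$ incident hyperedges. A backtracking enumeration then lists all such subsets across all starting vertices in total time $n \cdot (ek\Delta)^{O(r)}$. With $r = \lceil C \log(n/\eps) \rceil$ and $k$, $\Delta$ fixed, this is $(n/\eps)^{O(1)}$.

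The main technical obstacle is the hypergraph generalization of the Patel--Regts M\"obius-type inversion from induced sub-hypergraph counts to $i_j(G)$: one must fix a careful notion of induced sub-hypergraph, verify that the resulting linear system expressing $i_j(G)$ in terms of $\{\mathrm{ind}(H,G) : H \text{ connected}, |V(H)| \le j\}$ is lower triangular in a suitable size-based ordering and hence invertible, and bound the blow-up of the inversion coefficients. Both of these issues are handled in full generality for $2$-spin models on bounded-degree hypergraphs in~\cite{liu2019ising}, and specializing their algorithm to the hard-core activity vector (activity $\lam$ on occupied, $1$ on unoccupied) yields the stated lemma.
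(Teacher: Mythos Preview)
Your proposal is correct and follows the same approach the paper indicates: the paper does not actually prove this lemma but cites it as a specialization of~\cite{liu2019ising}, noting only that the coefficients of $\log Z$ are obtained from those of $Z$ via a triangular linear system and that the latter are computed by the Patel--Regts method as extended to hypergraphs by Liu--Sinclair--Srivastava. Your sketch simply unpacks this citation in more detail than the paper does.
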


\end{proof}

\section*{Acknowledgements}

This work was done as part of an AIM SQuaRE workshop on `The Independence Polynomial of Hypergraphs'.  We thank AIM and their staff for their support, and we thank Tyler Helmuth for very helpful preliminary conversations.  We thank Wojciech Samotij for illuminating conversations and for providing the example in Proposition~\ref{propExample}. We also thank Weiming Feng for carefully reading the manuscript and making valuable observations that improved the presentation of Theorem~\ref{thm:graph_bound}. DG is supported in part by the Simons Foundation. WP is supported in part by the NSF grant DMS-1847451.  MS is supported in part by the Onassis Foundation - Scholarship F ZP 051-1/2019-2020. PT supported in part by the NSF grant DMS-2151283 and Alexander M. Knaster Professorship.


\begin{thebibliography}{10}

\bibitem{balogh2021independent}
J.~Balogh, R.~I. Garcia, and L.~Li.
\newblock Independent sets in the middle two layers of {B}oolean lattice.
\newblock {\em Journal of Combinatorial Theory, Series A}, 178:105341, 2021.

\bibitem{barvinok2016combinatorics}
A.~Barvinok.
\newblock {\em Combinatorics and complexity of partition functions}, volume~9.
\newblock Springer, 2016.

\bibitem{barvinok2016computing}
A.~Barvinok.
\newblock Computing the permanent of (some) complex matrices.
\newblock {\em Foundations of Computational Mathematics}, 16(2):329--342, 2016.

\bibitem{barvinok2017computing}
A.~Barvinok and P.~Sober{\'o}n.
\newblock Computing the partition function for graph homomorphisms.
\newblock {\em Combinatorica}, 37(4):633--650, 2017.

\bibitem{baxter1980hard}
R.~J. Baxter.
\newblock Hard hexagons: exact solution.
\newblock {\em Journal of Physics A: Mathematical and General}, 13(3):L61,
  1980.

\bibitem{bencs2021limit}
F.~Bencs, P.~Buys, and H.~Peters.
\newblock The limit of the zero locus of the independence polynomial for
  bounded degree graphs.
\newblock {\em arXiv preprint arXiv:2111.06451}, 2021.

\bibitem{bencs2018note}
F.~Bencs and P.~Csikv{\'a}ri.
\newblock Note on the zero-free region of the hard-core model.
\newblock {\em arXiv preprint arXiv:1807.08963}, 2018.

\bibitem{BencsCsikvariRegts2021hypergraph}
F.~Bencs, P.~Csikv{\'a}ri, and G.~Regts.
\newblock Some applications of {W}agner's weighted subgraph counting
  polynomial.
\newblock {\em Electron. J. Combin.}, 28:Paper No. 4.14, 2021.

\bibitem{bencs2022complex}
F.~Bencs, P.~Csikv{\'a}ri, P.~Srivastava, and J.~Vondr{\'a}k.
\newblock On complex roots of the independence polynomial.
\newblock {\em arXiv preprint arXiv:2204.04868}, 2022.

\bibitem{bencs2021zero}
F.~Bencs, E.~Davies, V.~Patel, and G.~Regts.
\newblock On zero-free regions for the anti-ferromagnetic {P}otts model on
  bounded-degree graphs.
\newblock {\em Annales de l’Institut Henri Poincar{\'e} D}, 8(3):459--489,
  2021.

\bibitem{bezakova2019approximation}
I.~Bez{\'a}kov{\'a}, A.~Galanis, L.~A. Goldberg, H.~Guo, and D.~Stefankovic.
\newblock Approximation via correlation decay when strong spatial mixing fails.
\newblock {\em SIAM Journal on Computing}, 48(2):279--349, 2019.

\bibitem{bezakova2019inapproximability}
I.~Bez{\'a}kov{\'a}, A.~Galanis, L.~A. Goldberg, and D.~Stefankovic.
\newblock Inapproximability of the independent set polynomial in the complex
  plane.
\newblock {\em SIAM Journal on Computing}, 49(5):STOC18--395, 2019.

\bibitem{bezakova2021complexity}
I.~Bez{\'a}kov{\'a}, A.~Galanis, L.~A. Goldberg, and
  D.~{\v{S}}tefankovi{\v{c}}.
\newblock The complexity of approximating the matching polynomial in the
  complex plane.
\newblock {\em ACM Transactions on Computation Theory (TOCT)}, 13(2):1--37,
  2021.

\bibitem{bordewich2008path}
M.~Bordewich, M.~Dyer, and M.~Karpinski.
\newblock Path coupling using stopping times and counting independent sets and
  colorings in hypergraphs.
\newblock {\em Random Structures \& Algorithms}, 32(3):375--399, 2008.

\bibitem{brydges1984short}
D.~C. Brydges.
\newblock A short course on cluster expansions.
\newblock {\em Les Houches}, (PART I), 1984.

\bibitem{buys2021cayley}
P.~Buys.
\newblock Cayley trees do not determine the maximal zero-free locus of the
  independence polynomial.
\newblock {\em Michigan Math. J}, 70(3):635--648, 2021.

\bibitem{buys2022lee}
P.~Buys, A.~Galanis, V.~Patel, and G.~Regts.
\newblock Lee--{Y}ang zeros and the complexity of the ferromagnetic {I}sing
  model on bounded-degree graphs.
\newblock In {\em Forum of Mathematics, Sigma}, volume~10. Cambridge University
  Press, 2022.

\bibitem{cannon2020counting}
S.~Cannon and W.~Perkins.
\newblock Counting independent sets in unbalanced bipartite graphs.
\newblock In {\em Proceedings of the Fourteenth Annual ACM-SIAM Symposium on
  Discrete Algorithms}, pages 1456--1466. SIAM, 2020.

\bibitem{cassandro1981renormalization}
M.~Cassandro and E.~Olivieri.
\newblock Renormalization group and analyticity in one dimension: A proof of
  {D}obrushin's theorem.
\newblock {\em Communications in mathematical physics}, 80(2):255--269, 1981.

\bibitem{davies2021proof}
E.~Davies, M.~Jenssen, and W.~Perkins.
\newblock A proof of the upper matching conjecture for large graphs.
\newblock {\em Journal of Combinatorial Theory, Series B}, 151:393--416, 2021.

\bibitem{de2021zeros}
D.~de~Boer, P.~Buys, L.~Guerini, H.~Peters, and G.~Regts.
\newblock Zeros, chaotic ratios and the computational complexity of
  approximating the independence polynomial.
\newblock {\em arXiv preprint arXiv:2104.11615}, 2021.

\bibitem{feng2021fast}
W.~Feng, H.~Guo, Y.~Yin, and C.~Zhang.
\newblock Fast sampling and counting k-{SAT} solutions in the local lemma
  regime.
\newblock {\em Journal of the ACM (JACM)}, 68(6):1--42, 2021.

\bibitem{fernandez2007cluster}
R.~Fern{\'a}ndez and A.~Procacci.
\newblock Cluster expansion for abstract polymer models. new bounds from an old
  approach.
\newblock {\em Communications in Mathematical Physics}, 274(1):123--140, 2007.

\bibitem{fernandez2007analyticity}
R.~Fern{\'a}ndez, A.~Procacci, and B.~Scoppola.
\newblock The analyticity region of the hard sphere gas. {I}mproved bounds.
\newblock {\em Journal of Statistical Physics}, 128(5):1139--1143, 2007.

\bibitem{galanis2022complexity}
A.~Galanis, L.~A. Goldberg, and A.~Herrera-Poyatos.
\newblock The complexity of approximating the complex-valued {P}otts model.
\newblock {\em Computational Complexity}, 31(1):1--94, 2022.

\bibitem{galvin2004phase}
D.~Galvin and J.~Kahn.
\newblock On phase transition in the hard-core model on $\mathbb {Z}^d$.
\newblock {\em Combinatorics, Probability and Computing}, 13(02):137--164,
  2004.

\bibitem{greenberg1971thermodynamic}
W.~Greenberg.
\newblock Thermodynamic states of classical systems.
\newblock {\em Communications in Mathematical Physics}, 22(4):259--268, 1971.

\bibitem{groeneveld1962two}
J.~Groeneveld.
\newblock Two theorems on classical many-particle systems.
\newblock {\em Phys. Letters}, 3, 1962.

\bibitem{guo2019uniform}
H.~Guo, M.~Jerrum, and J.~Liu.
\newblock Uniform sampling through the {L}ov{\'a}sz local lemma.
\newblock {\em Journal of the ACM (JACM)}, 66(3):1--31, 2019.

\bibitem{harrow2020classical}
A.~W. Harrow, S.~Mehraban, and M.~Soleimanifar.
\newblock Classical algorithms, correlation decay, and complex zeros of
  partition functions of quantum many-body systems.
\newblock In {\em Proceedings of the 52nd Annual ACM SIGACT Symposium on Theory
  of Computing}, pages 378--386, 2020.

\bibitem{harvey2018computing}
N.~J. Harvey, P.~Srivastava, and J.~Vondr{\'a}k.
\newblock Computing the independence polynomial: from the tree threshold down
  to the roots.
\newblock In {\em Proceedings of the Twenty-Ninth Annual ACM-SIAM Symposium on
  Discrete Algorithms}, pages 1557--1576. SIAM, 2018.

\bibitem{HelmuthAlgorithmic2}
T.~Helmuth, W.~Perkins, and G.~Regts.
\newblock Algorithmic {P}irogov-{S}inai theory.
\newblock {\em Probability Theory and Related Fields}, 176:851--895, 2020.

\bibitem{hermon2019rapid}
J.~Hermon, A.~Sly, and Y.~Zhang.
\newblock Rapid mixing of hypergraph independent sets.
\newblock {\em Random Structures \& Algorithms}, 54(4):730--767, 2019.

\bibitem{jain2022towards}
V.~Jain, H.~T. Pham, and T.~D. Vuong.
\newblock Towards the sampling {L}ov{\'a}sz {L}ocal {L}emma.
\newblock In {\em 2021 IEEE 62nd Annual Symposium on Foundations of Computer
  Science (FOCS)}, pages 173--183. IEEE, 2022.

\bibitem{jansen2020cluster}
S.~Jansen and D.~Tsagkarogiannis.
\newblock Cluster expansions with renormalized activities and applications to
  colloids.
\newblock In {\em Annales Henri Poincar{\'e}}, volume~21, pages 45--79.
  Springer, 2020.

\bibitem{janson1988exponential}
S.~Janson, T.~Luczak, and A.~Rucinski.
\newblock {\em An exponential bound for the probability of nonexistence of a
  specified subgraph in a random graph}.
\newblock Institute for Mathematics and its Applications (USA), 1988.

\bibitem{jenssen2020homomorphisms}
M.~Jenssen and P.~Keevash.
\newblock Homomorphisms from the torus.
\newblock {\em arXiv preprint arXiv:2009.08315}, 2020.

\bibitem{jenssen2020independent}
M.~Jenssen and W.~Perkins.
\newblock Independent sets in the hypercube revisited.
\newblock {\em Journal of the London Mathematical Society}, 102(2):645--669,
  2020.

\bibitem{jenssen2022independent}
M.~Jenssen, W.~Perkins, and A.~Potukuchi.
\newblock Independent sets of a given size and structure in the hypercube.
\newblock {\em Combinatorics, Probability and Computing}, 31(4):702--720, 2022.

\bibitem{liu2014fptas}
J.~Liu and P.~Lu.
\newblock {FPTAS} for counting monotone {CNF}.
\newblock In {\em Proceedings of the Twenty-Sixth Annual ACM-SIAM Symposium on
  Discrete Algorithms}, pages 1531--1548. SIAM, 2014.

\bibitem{liu2019ising}
J.~Liu, A.~Sinclair, and P.~Srivastava.
\newblock The {I}sing partition function: Zeros and deterministic
  approximation.
\newblock {\em Journal of Statistical Physics}, 174(2):287--315, 2019.

\bibitem{moitra2019approximate}
A.~Moitra.
\newblock Approximate counting, the {L}ov{\'a}sz local lemma, and inference in
  graphical models.
\newblock {\em Journal of the ACM (JACM)}, 66(2):1--25, 2019.

\bibitem{moraal1976kirkwood}
H.~Moraal.
\newblock The {K}irkwood--{S}alsburg equation and the virial expansion for
  many-body potentials.
\newblock {\em Physics Letters A}, 59(1):9--10, 1976.

\bibitem{mousset2020probability}
F.~Mousset, A.~Noever, K.~Panagiotou, and W.~Samotij.
\newblock On the probability of nonexistence in binomial subsets.
\newblock {\em The Annals of Probability}, 48(1):493--525, 2020.

\bibitem{nguyen2020convergence}
T.~X. Nguyen and R.~Fern{\'a}ndez.
\newblock Convergence of cluster and virial expansions for repulsive classical
  gases.
\newblock {\em Journal of Statistical Physics}, 179:448--484, 2020.

\bibitem{patel2016deterministic}
V.~Patel and G.~Regts.
\newblock Deterministic polynomial-time approximation algorithms for partition
  functions and graph polynomials.
\newblock {\em SIAM Journal on Computing}, 46(6):1893--1919, 2017.

\bibitem{penrose1963convergence}
O.~Penrose.
\newblock Convergence of fugacity expansions for fluids and lattice gases.
\newblock {\em Journal of Mathematical Physics}, 4(10):1312--1320, 1963.

\bibitem{peters2019conjecture}
H.~Peters and G.~Regts.
\newblock On a conjecture of {S}okal concerning roots of the independence
  polynomial.
\newblock {\em The Michigan Mathematical Journal}, 68(1):33--55, 2019.

\bibitem{procacci2000gas}
A.~Procacci and B.~Scoppola.
\newblock The gas phase of continuous systems of hard spheres interacting via
  $n$-body potential.
\newblock {\em Communications in Mathematical Physics}, 211(2):487--496, 2000.

\bibitem{procacci2017convergence}
A.~Procacci and S.~A. Yuhjtman.
\newblock Convergence of {M}ayer and virial expansions and the {P}enrose
  tree-graph identity.
\newblock {\em Letters in Mathematical Physics}, 107(1):31--46, 2017.

\bibitem{qiu2022perfect}
G.~Qiu, Y.~Wang, and C.~Zhang.
\newblock A perfect sampler for hypergraph independent sets.
\newblock {\em arXiv preprint arXiv:2205.02050}, 2022.

\bibitem{rebenko1997convergence}
A.~Rebenko and G.~Shchepan'uk.
\newblock The convergence of cluster expansion for continuous systems with
  many-body interaction.
\newblock {\em Journal of Statistical Physics}, 88(3):665--689, 1997.

\bibitem{rebenko2005polymer}
A.~L. Rebenko.
\newblock Polymer expansions for continuous classical systems with many-body
  interaction.
\newblock {\em Methods Funct. Anal. Topology}, 11:73--87, 2005.

\bibitem{regts2018zero}
G.~Regts.
\newblock Zero-free regions of partition functions with applications to
  algorithms and graph limits.
\newblock {\em Combinatorica}, 38(4):987--1015, 2018.

\bibitem{ruelle1963correlation}
D.~Ruelle.
\newblock Correlation functions of classical gases.
\newblock {\em Annals of Physics}, 25:109--120, 1963.

\bibitem{scott2005repulsive}
A.~D. Scott and A.~D. Sokal.
\newblock The repulsive lattice gas, the independent-set polynomial, and the
  {L}ov{\'a}sz local lemma.
\newblock {\em Journal of Statistical Physics}, 118(5-6):1151--1261, 2005.

\bibitem{shearer1985problem}
J.~B. Shearer.
\newblock On a problem of {S}pencer.
\newblock {\em Combinatorica}, 5(3):241--245, 1985.

\bibitem{sly2010computational}
A.~Sly.
\newblock Computational transition at the uniqueness threshold.
\newblock In {\em 2010 IEEE 51st Annual Symposium on Foundations of Computer
  Science}, pages 287--296. IEEE, 2010.

\bibitem{stark2018probability}
D.~Stark and N.~Wormald.
\newblock The probability of non-existence of a subgraph in a moderately sparse
  random graph.
\newblock {\em Combinatorics, Probability and Computing}, 27(4):672--715, 2018.

\bibitem{van1994disagreement}
J.~van~den Berg and C.~Maes.
\newblock Disagreement percolation in the study of {M}arkov fields.
\newblock {\em The Annals of Probability}, pages 749--763, 1994.

\bibitem{weitz2006counting}
D.~Weitz.
\newblock Counting independent sets up to the tree threshold.
\newblock In {\em Proceedings of the thirty-eighth annual ACM symposium on
  Theory of computing}, pages 140--149, 2006.

\bibitem{wormald1996perturbation}
N.~C. Wormald.
\newblock The perturbation method and triangle-free random graphs.
\newblock {\em Random Structures \& Algorithms}, 9(1-2):253--270, 1996.

\bibitem{yang1952statistical}
C.-N. Yang and T.-D. Lee.
\newblock Statistical theory of equations of state and phase transitions. {I}.
  {T}heory of condensation.
\newblock {\em Physical Review}, 87(3):404, 1952.

\end{thebibliography}
\end{document}